
\documentclass[11pt, a4paper]{amsart}

\usepackage{amsfonts,amsmath,amssymb, amscd}
\usepackage{txfonts, mathabx}

\newtheorem{theorem}{Theorem}[section]
\newtheorem{lemma}[theorem]{Lemma}
\newtheorem{definition}[theorem]{Definition}
\newtheorem{prop}[theorem]{Proposition}
\newtheorem{coro}[theorem]{Corollary}

\theoremstyle{definition}
\newtheorem{rem}[theorem]{Remark}


\newcommand\NN{\mathbb{N}}
\newcommand\ZZ{\mathbb Z}

\newcommand\CC{\mathbb{C}}
\newcommand\FF{\mathbb{F}}

\renewcommand\AA{\mathbb{A}}

\newcommand\ii{\textbf{\textit{i}}}

\DeclareMathOperator{\Hilb}{Hilb}

\numberwithin{equation}{section}

\title[The zeta function of a punctual Hilbert scheme of the torus]
{Complete determination of\\ the zeta function of the Hilbert scheme of $n$~points\\ 
on a two-dimensional torus}

\author{Christian Kassel}
\address{Christian Kassel: 
Universit\'e de Strasbourg, CNRS, IRMA UMR 7501, F--67000 Strasbourg, France}
\email{kassel@math.unistra.fr}
\urladdr{www-irma.u-strasbg.fr/\raise-2pt\hbox{\~{}}kassel/}

\author{Christophe Reutenauer}
\address{Christophe Reutenauer:
Math\'ematiques, Universit\'e du Qu\'ebec \`a Montr\'eal,
Montr\'eal, CP 8888, succ.\ Centre Ville, Canada H3C 3P8}
\email{reutenauer.christophe@uqam.ca}
\urladdr{www.lacim.uqam.ca/\raise-2pt\hbox{\~{}}christo/}

\keywords{Infinite product, modular forms, zeta function, Hilbert scheme}

\subjclass[2010]{(Primary)
05A17, 
14C05, 
14G10, 
14N10 
(Secondary)
05A30, 
11P84, 
14G15. 
}

\begin{document}

\begin{abstract}
We compute the coefficients of the polynomials~$C_n(q)$ defined by the equation
\begin{equation*}
1 + \sum_{n\geq 1} \, \frac{C_n(q)}{q^n}  \, t^n
= \prod_{i\geq 1}\, \frac{(1-t^i)^2}{1-(q+q^{-1})t^i + t^{2i}} \, .
\end{equation*}
As an application we obtain an explicit formula for the zeta function of the Hilbert scheme of
$n$~points on a two-dimensional torus and show that this zeta function satisfies a remarkable functional equation.
The polynomials~$C_n(q)$ are divisible by~$(q-1)^2$.
We also compute the coefficients of the polynomials $P_n(q) = C_n(q)/(q-1)^2$:
each coefficient counts the divisors of~$n$ in a certain interval; it is thus a non-negative integer. 
Finally we give arithmetical interpretations for the values of~$C_n(q)$ and of~$P_n(q)$ at $q = -1$
and at roots of unity of order $3$, $4$,~$6$.
\end{abstract}

\maketitle

\section{Introduction}

Let $\FF_q$ be a finite field of cardinality~$q$ and $\FF_q[x,y,x^{-1}, y^{-1}]$ be the algebra of
Laurent polynomials in two variables with coefficients in~$\FF_q$.
In\,\cite[Th.\,1.1]{KRidealcount} we gave the  following formula for the number~$C_n(q)$ of ideals of codimension~$n$
of~$\FF_q[x,y,x^{-1}, y^{-1}]$, where $n\geq 1$:
\begin{equation}\label{formula-Cnq}
C_n(q) = \sum_{\lambda \,\vdash n} \,
(q-1)^{2 v(\lambda)} \, q^{n - \ell(\lambda)} \,\prod_{i= 1, \ldots, t \atop d_i \geq 1} \, \frac{q^{2d_i} - 1}{q^2 - 1} \, ,
\end{equation}
where the sum runs over all partitions~$\lambda$ of~$n$
(the notation $\ell(\lambda)$, $\nu(\lambda)$, $d_i$ are defined in \emph{loc. cit.}).
This formula has the following immediate consequences.

\begin{itemize}
\item[(i)]
$C_n(q)$ is a monic polynomial in the variable~$q$ with integer coefficients and of degree~$2n$.

\item[(ii)]
The polynomial $C_n(q)$ is divisible by $(q-1)^2$.

\item[(iii)]
If we set 
\begin{equation*}
P_n(q) = \frac{C_n(q)}{(q-1)^2} \, ,
\end{equation*}
then $P_n(q)$ is a monic polynomial with integer coefficients and of degree~$2n-2$.

\item[(iv)]
The value at~$q=1$ of $P_n(q)$ is given by
\begin{equation*}\label{eq-Pn1}
P_n(1) = \sigma(n) = \sum_{d | n\, ;\, d\geq1} \, d .
\end{equation*}
\end{itemize}

The aim of the present article is to compute the coefficients of the polynomials~$C_n(q)$ and~$P_n(q)$.
To this end we use another consequence of\,\eqref{formula-Cnq}, namely the following infinite product
expression we obtained in \emph{loc. cit.} for the generating function of the polynomials~$C_n(q)$
(see\,\cite[Cor.\,1.3]{KRidealcount}):
\begin{equation}\label{gf-Cnq}
1 + \sum_{n\geq 1} \, \frac{C_n(q)}{q^n}  \, t^n
= \prod_{i\geq 1}\, \frac{(1-t^i)^2}{1-(q+q^{-1})t^i + t^{2i}} \, .
\end{equation}

This infinite product is clearly invariant under the transformation $q \leftrightarrow q^{-1}$.
Therefore, 
\begin{equation*}
\frac{C_n(q)}{q^n} = \frac{C_n(q^{-1})}{q^{-n}} \, ,
\end{equation*}
which implies that the polynomials~$C_n(q)$ are palindromic.
We can thus express them as follows:
\begin{equation}\label{def-cni}
C_n(q) = c_{n,0} \, q^n + \sum_{i=1}^n \, c_{n,i} \, \left( q^{n+i} + q^{n-i} \right),
\end{equation}
where the coefficients $c_{n,i}$ ($0 \leq i \leq n$) are integers.

Our first main theorem is the following.

\begin{theorem}\label{th-cni}
Let $c_{n,i}$ be the coefficients of the Laurent polynomial~$C_n(q)$ as defined by\,\eqref{def-cni}.

(a) For the central coefficients we have
\begin{equation*}
c_{n,0} =
\left\{
\begin{array}{cl}
2\, (-1)^k  & \text{if}\;\, n = k(k+1)/2 \;\; \text{for some integer}\; k \geq 1, 
\\
\noalign{\smallskip}
0 & \text{otherwise.}
\end{array}
\right.
\end{equation*}

(b) For the non-central coefficients ($i\geq 1$) we have
\begin{equation*}
c_{n,i} =
\left\{
\begin{array}{cl}
(-1)^k & \text{if}\;\, n = k(k+2i +1)/2 \;\; \text{for some integer}\; k  \geq 1, \\
\noalign{\smallskip}
(-1)^{k-1} & \text{if}\;\, n = k(k+2i -1)/2 \;\; \text{for some integer}\; k \geq 1, \\
\noalign{\smallskip}
0 & \text{otherwise.} 
\end{array}
\right.
\end{equation*}
\end{theorem}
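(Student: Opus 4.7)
The plan is to transform the infinite product in\,\eqref{gf-Cnq} via the Jacobi triple product identity. Since $1 - (q+q^{-1})t^i + t^{2i} = (1-qt^i)(1-q^{-1}t^i)$, substituting $z=-q$ into the triple product
\begin{equation*}
\sum_{n\in\ZZ}z^n t^{n(n-1)/2} = \prod_{i\geq 1}(1-t^i)(1+zt^{i-1})(1+z^{-1}t^i),
\end{equation*}
and isolating the $i=1$ factor of the middle product, yields
\begin{equation*}
(1-q)\prod_{i\geq 1}(1-t^i)(1-qt^i)(1-q^{-1}t^i) = \sum_{n\in\ZZ}(-1)^n q^n t^{n(n-1)/2}=:\Theta(q,t).
\end{equation*}
Setting $\eta(t):=\prod_i(1-t^i)$ and combining with the classical Jacobi identity $\eta(t)^3 = \sum_{k\geq 0}(-1)^k(2k+1)t^{k(k+1)/2}$, the relation\,\eqref{gf-Cnq} is equivalent to the fundamental identity
\begin{equation*}
\Bigl(1+\sum_{n\geq 1}\tfrac{C_n(q)}{q^n}t^n\Bigr)\Theta(q,t) = (1-q)\sum_{k\geq 0}(-1)^k(2k+1)t^{T_k}, \qquad T_k:=\tfrac{k(k+1)}{2}.
\end{equation*}

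The right-hand side of this fundamental identity is extremely sparse in $q$: only the monomials $q^0 t^{T_k}$ and $q^1 t^{T_k}$ carry nonzero coefficients $\pm(2k+1)$. The coefficient of $q^r t^N$ on the left must therefore vanish for every other $(r,N)$. My plan is to substitute the closed-form values of $c_{n,i}$ proposed in the theorem into the left-hand side and show that the resulting triple sum collapses onto this sparse right-hand side. The natural reorganisation groups terms by the parameter $k$ of the representation $2n=k(k+2i\pm 1)$: for each $k\geq 1$ the type-2 term at $(k,i)$ and the type-3 term at $(k,i+1)$ both land at $t^{T_k + ki}$, and together with the central contribution $c_{T_k,0}=2(-1)^k$ (present at the boundary $i=0$, where type-2 is absent) they telescope; a direct computation then shows that the conjectured generating function equals
\begin{equation*}
1 + (q+q^{-1}-2)\sum_{k\geq 1}(-1)^{k-1}\frac{t^{T_k}(1+t^k)}{(1-qt^k)(1-q^{-1}t^k)}.
\end{equation*}
The theorem thus reduces to verifying that this compact expression equals the infinite product on the right of\,\eqref{gf-Cnq}.

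The hard part is this last verification, an identity between an infinite product and a partial-fraction sum in $q$. My preferred strategy is to compare simple poles in $q$ at $q=t^{\pm k}$: the partial-fraction sum contributes only its single $k$-th term at each such pole, whose residue can be matched against that of the infinite product (after removing the offending factor) via Jacobi's triple product and elementary product manipulations. Alternatively, one can proceed by induction on the power of $t$, extracting coefficients from the fundamental identity order by order. In either approach, the careful bookkeeping of signs and of the boundary case $i=0$, which produces the factor $2$ in $c_{n,0}$, is the technical heart of the argument.
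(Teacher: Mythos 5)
Your reduction is set up correctly and follows a genuinely different route from the paper's. The Jacobi triple product does turn \eqref{gf-Cnq} into the sparse identity $\bigl(1+\sum_{n}C_n(q)q^{-n}t^n\bigr)\Theta(q,t)=(1-q)\sum_{k\geq 0}(-1)^k(2k+1)t^{T_k}$, and the ``compact expression'' you arrive at is exactly the paper's Corollary~3.4, from which the stated values of $c_{n,i}$ do follow. The paper, by contrast, never invokes the triple product: it quotes a Lambert-series expansion of the infinite product from Fine's book (Eq.\,(10.1) there), deduces a divisor-counting formula for the coefficients $a_{n,i}$ of $P_n(q)=C_n(q)/(q-1)^2$ (its Proposition~3.1), and then obtains $c_{n,i}$ as the second difference $a_{n,i+1}-2a_{n,i}+a_{n,i-1}$, which telescopes the Lambert denominators $1/(1-t^k)$ away.

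The genuine gap is that the identity carrying all the weight in your argument is never proved. What you actually carry out (the telescoping and the bookkeeping at $i=0$) only shows that the \emph{conjectured} coefficients have generating function $1+(q+q^{-1}-2)\sum_{k\geq1}(-1)^{k-1}t^{T_k}(1+t^k)/\bigl((1-qt^k)(1-q^{-1}t^k)\bigr)$; the theorem is then equivalent to this expression being equal to the infinite product, and that statement is precisely the paper's Proposition~3.1 in disguise (equivalently, Fine's Eq.\,(10.1)). Your two strategies for it are only named, and the residue-matching one is incomplete as described: matching residues at the simple poles $q=t^{\pm k}$ only shows that the difference of the two sides is holomorphic in $q$ on $\mathbb{C}^{*}$, and since both sides have essential singularities at $q=0$ and $q=\infty$ you still need a quasi-periodicity argument under $q\mapsto qt$ together with a normalization to force that difference to vanish --- none of which is supplied. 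A second, smaller omission: to pass from the generating-function identity to the case-by-case formula you must know that no $n$ is simultaneously of the form $k(k+2i+1)/2$ and $k'(k'+2i-1)/2$ (the paper's Lemma~3.5 on trapezoidal numbers); otherwise the two middle cases of the theorem could overlap and $c_{n,i}$ would be a sum of two contributions. This fact is easy but is not addressed in your proposal.
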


Note that the coefficients of~$C_n(q)$ take only the values $0$, $\pm 1$, and~$\pm 2$,
and that in Item\,(c) the first two conditions are mutually exclusive.
A list of polynomials $C_n(q)$ ($1 \leq n \leq 12$) appears in Table\,\ref{tableC}.

In the next section we will apply Theorem\,\ref{th-cni} to give an explicit formula for the local zeta function of
the Hilbert scheme of $n$~points on a two-dimensional torus (see Theorem\,\ref{th-mainZ}).

Since the polynomial $P_n(q)$ is the quotient of two palindromic polynomials,
$P_n(q)$ is palindromic as well. We can thus expand it as follows:
\begin{equation*}
P_n(q) = a_{n,0}\, q^{n-1}  + \sum_{i=1}^{n-1} \, a_{n,i} \, \left( q^{n+i-1} + q^{n-i-1} \right) , 
\end{equation*}
where $a_{n,i}$ ($0 \leq i \leq n-1$) are integers.

We now state our second main theorem.

\begin{theorem}\label{th-ani} 
For $n\geq 1$ and $0 \leq i \leq n-1$, 
the integer~$a_{n,i}$ is equal to the number of divisors~$d$ of~$n$ satisfying the inequalities
\begin{equation*}
\frac{i+ \sqrt{2n+i^2}}{2} < d \leq i+ \sqrt{2n+i^2} \, .
\end{equation*}
\end{theorem}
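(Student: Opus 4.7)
My plan is to use the relation $(q-1)^2 P_n(q)=C_n(q)$ to convert Theorem~\ref{th-cni} into an explicit formula for $a_{n,i}$, and then to recognize that formula as the claimed divisor count. Expanding $1/(1-q)^2=\sum_{r\ge 0}(r+1)q^r$ formally and using the palindromy of $C_n(q)$ around $q^n$, I would obtain
\begin{equation*}
a_{n,i}\;=\;i\,c_{n,0}+\sum_{j=1}^{i-1}2i\,c_{n,j}+\sum_{j=i}^{n}(i+j)\,c_{n,j}\qquad(i\ge 1),
\end{equation*}
and $a_{n,0}=\sum_{j\ge 1}j\,c_{n,j}$. Substituting Theorem~\ref{th-cni}, each nonzero contribution is indexed by a factorization $2n=km$ with $1\le k<m$ and $k+m$ odd: the first case of Theorem~\ref{th-cni}(b) (``Type~A'', valid when $m-k\ge 3$) contributes $(-1)^k$ to $c_{n,(m-k-1)/2}$, and the second case (``Type~B'') contributes $(-1)^{k-1}$ to $c_{n,(m-k+1)/2}$.

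The key collapse is that for each such pair $(k,m)$ with $m-k\ge 3$, the combined Type~A$+$Type~B contribution to $a_{n,i}$ equals $(-1)^{k-1}\bigl(w_{(m-k+1)/2}-w_{(m-k-1)/2}\bigr)$, where $w_j$ is the weight of $c_{n,j}$ in the displayed formula; a direct check shows that $w_{s+1}-w_s$ equals $1$ when $s\ge i$ and $0$ otherwise. If $n=K(K+1)/2$ is triangular, the special pair $(K,K+1)$ with $m-k=1$ (only a Type~B contribution) contributes $(-1)^{K-1}w_1=-2i(-1)^K$, canceling exactly the central term $i\,c_{n,0}=2i(-1)^K$; otherwise both terms vanish. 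The result is the unified formula
\begin{equation*}
a_{n,i}\;=\sum_{\substack{km=2n,\ k<m\\ k+m\text{ odd},\ m-k\ge 2i+1}}(-1)^{k-1}\qquad(i\ge 0).
\end{equation*}
Since opposite-parity factorizations of $2n$ are in bijection with odd divisors $e$ of $n$ (take $e$ to be whichever of $k,m$ is odd), and since the sign $(-1)^{k-1}$ is $+1$ when $e=k<\sqrt{2n}$ and $-1$ when $e=m>\sqrt{2n}$, writing $n=2^a b$ with $b$ odd gives
\begin{equation*}
a_{n,i}\;=\;\#\{e\mid b:\ e\le\beta\}-\#\{e\mid b:\ e\ge\gamma\},
\end{equation*}
where $\beta<\gamma$ are the positive roots of $x^2\pm(2i+1)x-2n=0$ (so $\beta\gamma=2n$ and $\gamma-\beta=2i+1$).

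It remains to match this signed count with the number of divisors of $n$ in $(\alpha/2,\alpha]$ for $\alpha:=i+\sqrt{2n+i^2}$. Writing each divisor of $n$ uniquely as $d=2^ce$ with $e\mid b$ and $0\le c\le a$, the count telescopes:
\begin{equation*}
\#\{d\mid n:\ \alpha/2<d\le\alpha\}\;=\;\#\{e\mid b:\ e\le\alpha\}-\#\{e\mid b:\ e\le\alpha/2^{a+1}\};
\end{equation*}
applying the involution $e\mapsto b/e$ on divisors of $b$ to the second count (and using $2^{a+1}b/\alpha=2n/\alpha=\alpha-2i=:\alpha'$) reduces Theorem~\ref{th-ani} to the identity
\begin{equation*}
\#\{e\mid b:\ \beta<e\le\alpha\}\;=\;\#\{e\mid b:\ \alpha'\le e<\gamma\}.
\end{equation*}
This last identity is the main obstacle, and I intend to prove it via two short parity/integrality lemmas: (i)~no divisor of $n$ can lie in $(\beta,\alpha']$, for if $e\mid n$ were in this interval then the bounds $0\le 2n-e^2-2ie<e$ together with $e\mid(2n-e^2-2ie)$ force $2n=e(e+2i)$, and $e$ odd then makes the right side odd---a contradiction; (ii)~no divisor of $n$ can lie in $(\alpha,\gamma)$, for if $e\mid n$ were there then $2(n/e)$ would be a positive integer strictly between the consecutive integers $e-2i-1$ and $e-2i$. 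Together, (i) and (ii) collapse both sides of the identity to the common set $\{e\mid b:\ \alpha'\le e\le\alpha\}$, completing the proof. The only delicate point is the boundary case $\alpha'\in\ZZ$ in~(i), but the parity contradiction handles it uniformly.
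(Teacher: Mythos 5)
Your proposal has a structural problem that makes it unusable as written: it takes Theorem~\ref{th-cni} as its starting point, but in this paper Theorem~\ref{th-cni} is itself deduced (in Section~\ref{ssec-pf-cni}, via Proposition~\ref{prop-gf-c}) from Proposition~\ref{prop-gf-a}, which is established \emph{inside} the proof of Theorem~\ref{th-ani}. Concretely, your first step converts Theorem~\ref{th-cni} into the formula
\begin{equation*}
a_{n,i}\;=\sum_{\substack{km=2n,\ k<m\\ k+m\ \text{odd},\ m-k\ge 2i+1}}(-1)^{k-1},
\end{equation*}
and the paper's proof of Theorem~\ref{th-cni} is precisely the reverse of this computation; so your argument establishes the equivalence of the two statements rather than proving either. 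The missing ingredient is an independent proof of the displayed formula, which the paper obtains from the generating function \eqref{gf-Cnq} together with Fine's identity \cite[Eq.\ (10.1)]{Fi} (part (a) of Section~\ref{ssec-pf-ani} and Proposition~\ref{prop-gf-a}). Your weight bookkeeping (the computation of $w_j$, the fact that $w_{s+1}-w_s=1$ exactly when $s\ge i$, and the cancellation of the triangular term against $i\,c_{n,0}$) is correct, but it cannot substitute for that input.

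The second half of your argument, from the displayed formula to the divisor count, is correct and genuinely different from the paper's route: the paper manipulates the Lambert-type series of Proposition~\ref{prop-gf-a} following \cite{CESM} (separating odd and even $k$ and telescoping $t^{ka}/(1-t^k)-t^{kb}/(1-t^k)$ into $\sum_{a\le d<b}t^{kd}$), whereas you parametrize the opposite-parity factorizations by odd divisors $e$ of $n$, telescope the target count over powers of $2$, and finish with two integrality/parity lemmas. Those lemmas are sound, with one caveat: your claim~(i) is stated for all divisors of $n$ but is only true (and only needed) for \emph{odd} divisors --- for instance with $n=2$, $i=0$ the even divisor $2=\alpha'$ does lie in $(\beta,\alpha']$, and indeed $2n=e(e+2i)$ holds there; the contradiction you invoke (that $e(e+2i)$ is odd) requires $e$ odd. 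With that restriction made explicit, and with the first half replaced by an independent derivation of the formula for $a_{n,i}$ from \eqref{gf-Cnq} (e.g.\ the paper's use of Fine's identity), your argument would give a valid and arguably more elementary alternative to the paper's computation of the coefficients.
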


The most striking consequence of this theorem is that the coefficients of~$P_n(q)$ are \emph{non-negative} integers.
Table\,\ref{tableP} lists the polynomials~$P_n(q)$ for $1 \leq n \leq 12$
(this table also displays their values at~$-1$, at a primitive third root of unity~$j$ and at $\ii = \sqrt{-1}$).

\begin{rem}
It follows from Theorem\,\ref{th-ani} that the central coefficient~$a_{n,0}$ of~$P_n(q)$ is equal
to the number of ``middle divisors'' of~$n$, 
i.e. of the divisors~$d$ in the half-open interval $(\!\sqrt{n/2},\sqrt{2n}\,]$.
The table\footnote{This table is due to R.~Zumkeller} in~\cite[A067742]{OEIS} 
listing the central coefficients~$a_{n,0}$ for $n \leq 10000$ 
suggests that the sequence $(a_{n,0})_n$ grows very slowly (see also~\cite[A128605]{OEIS});
Vatne recently proved that this sequence is unbounded (see\,\cite{Va}).
\end{rem}

\begin{rem}
Since the degree of~$P_n(q)$ is $2n-2$, its coefficients are non-negative integers and their sum~$P_n(1)$ 
is equal to the sum~$\sigma(n)$ of divisors of~$n$, 
necessarily at least one of the coefficients of~$P_n(q)$ must be $\geq 2$ when $\sigma(n) \geq 2n$, i.e.
when $n$ is a perfect number (such as $6$ or $28$) or an abundant number (such as $12$, $18$, $20$, $24$ or~$30$).
\end{rem}

Our third main result is concerned with the values of~$C_n(q)$ and~$P_n(q)$
at certain roots of unity. 
Indeed, if $\omega$ is a root of unity of order $d = 2$, $3$, $4$, or $6$, then 
$\omega + \omega^{-1} \in \ZZ$, which together with\,\eqref{gf-Cnq}
shows that $C_n(\omega)/\omega^n$ is an integer. 
It is easy to check that when we set $q = \omega$ in\,\eqref{gf-Cnq} the infinite product
is expressible in terms of Dedekind's eta function $\eta(z) = t^{1/24} \prod_{n \geq 1}\, (1-t^n)$
where $t = e^{2\pi iz}$.
More precisely,
\begin{equation}\label{eq-etaproduct}
1 + \sum_{n\geq 1} \, \frac{C_n(\omega)}{\omega^n}  \, t^n = 
\left\{
\begin{array}{ccl}
\displaystyle{\frac{\eta(z)^4}{\eta(2z)^2} } && \text{if} \; d = 2 , \\
\noalign{\medskip}
\displaystyle{\frac{\eta(z)^3}{\eta(3z)} }&& \text{if} \; d = 3 , \\
\noalign{\medskip}
\displaystyle{\frac{\eta(z)^2 \, \eta(2z)}{\eta(4z)} } && \text{if} \; d = 4 , \\
\noalign{\medskip}
\displaystyle{\frac{\eta(z) \, \eta(2z) \, \eta(3z)}{\eta(6z) }} && \text{if} \; d = 6 .
\end{array}
\right.
\end{equation}
The eta-products appearing in the previous equality are \emph{modular forms} of weight~one and
of level~$d$ (see\,\cite{Ko}).
We will determine the four sequences $(C_n(\omega))_{n\geq 1}$ explicitely (see Theorem\,\ref{th-omega}).
In view of their relationship with modular forms, it is not surprising that these sequences are
related to well-known arithmetical sequences.

In order to state Theorem\,\ref{th-omega} we introduce some notation.

\begin{itemize}
\item[(i)] 
Let~$r(n)$ be the number of representations of~$n$ as a sum of two squares:
\begin{equation}\label{def-r1n}
r(n) = \left\{ (x,y) \in \ZZ^2 \, |\, x^2 + y^2 =n  \right \}.
\end{equation}
The sequence~$r(n)$ is Sequence~A004018 of\,\cite{OEIS}. 
Its generating function is the theta series of the square lattice.
Note that $r(n)$ is divisible by~$4$ for $n\geq 1$ due to the symmetries of the square lattice.

\item[(ii)] 
Similarly, we denote by~$r'(n)$ the number of representations of~$n$ as a sum of a square and twice another square:
\begin{equation}\label{def-r2n}
r'(n) = \left\{ (x,y) \in \ZZ^2 \, |\, x^2 + 2y^2 =n  \right \}.
\end{equation}
The sequence $r'(n)$ is Sequence~A033715 of\,\cite{OEIS}.

\item[(iii)]  
Let $\lambda(n)$ be the multiplicative function of~$n$ defined by on all prime powers by
\begin{equation}\label{def-lambda}
\lambda(p^e) = 
\begin{cases}
\hskip 15pt -2 & \text{if} \; p=3 \; \text{and}\; e\geq 1,  \\
\noalign{\medskip}
\hskip 15pt  e+1\ & \text{if} \; p \equiv 1 \pmod{6}, \\
\noalign{\medskip}
(1+(-1)^e)/2 & \text{if} \; p \equiv 2,5 \pmod{6}.
\end{cases}
\end{equation}
Recall that ``multiplicative'' means that $\lambda(mn) = \lambda(m)\lambda(n)$ whenever $m$ and $n$
are coprime. The function $\lambda(n)$ can also be expressed in terms of the excess function
\begin{equation}\label{def-excess}
E_1(n;3) 
= \sum_{d|n\atop d \equiv 1\; \text{mod} \, 3} \, 1 -  \sum_{d|n\atop d \equiv 2\; \text{mod} \, 3} \, 1.
\end{equation}
Indeed, $\lambda(n) =  E_1(n;3) - 3 E_1(n/3;3)$, where we agree that $E_1(n/3;3) = 0$ 
when $n$ is not divisible by~$3$.
\end{itemize}

We can now state our results concerning the values of~$C_n(q)$ and~$P_n(q)$ 
at the roots of unity of order $d = 2$, $3$, $4$,~$6$.
We express these values in terms of the notation we have just introduced.

\begin{theorem}\label{th-omega} 
Let $n$ be an integer $\geq 1$.

(a) We have
\begin{equation*}
C_n(-1) = r(n) 
\quad\text{and}\quad
P_n(-1) = \frac{r(n)}{4}\, . 
\end{equation*}

(b) Let $j= e^{2\pi i/3}$, a primitive third root of unity. Then 
\begin{equation*}
C_n(j) = -3 \lambda(n)  j^n
\quad\text{and}\quad
P_n(j) = \lambda(n)  j^{n-1}. 
\end{equation*}

(c) Let $\ii = \sqrt{-1}$ be a square root of~$-1$. 
Then\footnote{If $\alpha$ is a real number, then $\lfloor \alpha \rfloor$ stands for the largest integer $\leq \alpha$.}
\begin{equation*}
C_n(\ii) = (-1)^{\lfloor (n+1)/2\rfloor}  \, r'(n) \, \ii^n
\quad\text{and}\quad
P_n(\ii) = (-1)^{\lfloor (n-1)/2\rfloor}  \, \frac{r'(n)}{2} \, \ii^{n-1}.
\end{equation*}

(d) Let $-j = e^{\pi i/3}$, a primitive sixth root of unity. We have
\begin{equation*}
C_n(-j) = 
\begin{cases}
\hskip 12pt  r(n)  & \text{if} \; n \equiv 0,  \\
\noalign{\medskip}
\hskip 12pt   \displaystyle{\frac{r(n)}{4}}  j & \text{if} \; n \equiv 1, \quad\pmod{3}\\
\noalign{\medskip}
\hskip 5pt -\displaystyle{\frac{r(n)}{2}}  j^2 & \text{if} \; n \equiv 2,
\end{cases}
\end{equation*}
and $P_n(-j) = C_n(-j)/j = C_n(-j)\, j^2$.
\end{theorem}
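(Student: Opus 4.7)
The plan is to exploit identity \eqref{eq-etaproduct}: for each root of unity $\omega$ of order $d \in \{2,3,4,6\}$, the generating series $\sum_{n\geq 0} C_n(\omega)\omega^{-n}\, t^n$ equals an explicit eta-quotient which is a weight-one modular form on a congruence subgroup of small level. The content of the theorem is then that the Fourier coefficients of each such form match the claimed arithmetic functions. The accompanying formulas for $P_n(\omega)$ will follow at once by dividing by $(\omega-1)^2$, which takes the values $4$, $-3j$, $-2\ii$ and $j$ at $\omega = -1$, $j$, $\ii$ and $-j$ respectively.

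For part (a) I would invoke Jacobi's triple product identity in the form $\theta_4(t) = \sum_{n\in\ZZ}(-1)^n t^{n^2} = \prod_{n\geq 1}(1-t^{2n})(1-t^{2n-1})^2$, and observe that after splitting $\prod_n(1-t^n)^2$ according to the parity of $n$ this equals $\eta(z)^2/\eta(2z)$. Squaring and using $\theta_4(t) = \theta_3(-t)$ together with $\theta_3(t)^2 = \sum_n r(n)\, t^n$ yields $\eta(z)^4/\eta(2z)^2 = \sum_n (-1)^n r(n)\, t^n$, so that comparison with \eqref{eq-etaproduct} at $\omega = -1$ gives $C_n(-1) = r(n)$.

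Parts (b), (c), (d) follow the same template: each of the eta-quotients is a weight-one form whose Fourier expansion must be read off. For (b) I would establish the identity $\eta(z)^3/\eta(3z) = \frac12\bigl(3E_1(3z,\chi_{-3}) - E_1(z,\chi_{-3})\bigr)$ in the appropriate space of weight-one forms on $\Gamma_0(9)$ with character $\chi_{-3}$, and then translate its coefficients via the relation $\lambda(n) = E_1(n;3) - 3E_1(n/3;3)$ recalled in the excerpt; since $E_1(z,\chi_{-3}) = 1 + 6\sum_n E_1(n;3)\, t^n$, this gives $C_n(j)\, j^{-n} = -3\lambda(n)$ as required. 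For (c) I would match $\eta(z)^2\eta(2z)/\eta(4z)$ with a twist of the theta series $\theta_3(t)\theta_3(t^2) = \sum_n r'(n)\, t^n$ of the form $x^2+2y^2$; the sign $(-1)^{\lfloor(n+1)/2\rfloor}$ will emerge from the substitution $t \mapsto \pm\ii t$. For (d) I would decompose the level-six eta-quotient $\eta(z)\eta(2z)\eta(3z)/\eta(6z)$ inside the weight-one space on $\Gamma_0(6)$; the three-case answer according to $n \bmod 3$ will reflect the Atkin--Lehner involution at $3$ applied to the form from part (a), which reintroduces the factor $r(n)$ while twisting by the cubic character.

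The main obstacle is precisely the explicit identification, in (b)--(d), of each eta-quotient with the stated arithmetic generating series. These are classical weight-one modular identities (compare \cite{Ko}), but turning them into complete verifications requires either a dimension argument in a small space of modular forms---comparing finitely many low-order Fourier coefficients on both sides and invoking multiplicativity to extend the equality to all~$n$---or a direct infinite-product manipulation built on Jacobi's triple and quintuple products. Case (d) is the most delicate, since its answer interweaves the sum-of-two-squares function $r(n)$ with the Eisenstein-integer character $\chi_{-3}$ and therefore forces the three residue classes $n \bmod 3$ to be treated separately.
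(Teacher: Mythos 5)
Your part (a) is sound and is essentially the paper's argument: after reducing the infinite product at $\omega=-1$ to $\bigl(\prod_{i\geq 1}(1-t^i)/(1+t^i)\bigr)^2$, Gauss's identity \eqref{eq-Gauss} and the fact that $k^2\equiv k\pmod 2$ give $\sum_n(-1)^n r(n)\,t^n$, whence $C_n(-1)=r(n)$; and dividing by $(\omega-1)^2$, whose values $4$, $-3j$, $-2\ii$, $j$ you compute correctly, does yield all four $P_n$ statements from the corresponding $C_n$ statements. The difficulty is that for (b), (c) and (d) you have written a programme rather than a proof: each identification of an eta-quotient with the stated arithmetic generating function is introduced by ``I would establish'' and then deferred, and you yourself flag these identifications as ``the main obstacle.'' Since those identifications \emph{are} the content of the theorem, the proposal as it stands only proves part (a).

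Beyond incompleteness, the mechanism you propose for (c) would fail as described. The sign $(-1)^{\lfloor(n+1)/2\rfloor}$ has period four (pattern $-,-,+,+$ for $n=1,2,3,4$), and no single substitution $t\mapsto\pm\ii t$ applied to $\sum_n r'(n)t^n$ can produce it, since such a substitution multiplies the $n$-th coefficient by $(\pm\ii)^n$ and hence yields non-real coefficients in odd degree. What is actually needed is the full $4$-section $\frac14\sum_{k=0}^{3}\ii^{-kn}f(\ii^k t)$ combined with the separate product identity $\prod_{n\geq1}(1-q^n)^2/(1+q^{2n})=\varphi(-q)\varphi(-q^2)$; this is precisely the content of the paper's Appendix (Lemma~\ref{lem-app}, following Somos \cite{So}), which establishes the sign pattern and $|a_4(n)|=r'(n)$ simultaneously. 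For (b) and (d) your proposed routes (a weight-one Eisenstein decomposition, and an Atkin--Lehner argument at level six) are plausible in principle but entirely unverified; note that the three-case formula in (d) is not simply the Atkin--Lehner image of the level-two form, and the paper obtains it by a genuine computation carried out in the companion article \cite{KReta}, while (b) is read off from the explicit expansion of $\prod_i(1-t^i)^3/(1-t^{3i})$ in \cite[\S 32]{Fi}. To complete your argument you would need, for each level, either the finite coefficient comparison in the relevant space of weight-one forms (made rigorous by a Sturm-type bound) or the direct product manipulations that the paper performs.
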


\begin{table}[ht]
\caption{\emph{The polynomials $C_n(q)$}}\label{tableC}
\renewcommand\arraystretch{1.25}
\noindent\[
\begin{array}{|c||c|c|}
\hline
n & C_n(q) & C_n(-1) \\
\hline\hline
1 & q^2 - 2q + 1 & 4 \\ 
\hline
2 & q^4 - q^3 - q + 1 & 4  \\
\hline
3 & q^6 - q^5 - q^4 + 2 q^3 - q^2 - q + 1 &  0  \\ 
\hline
4 & q^8 - q^7- q + 1 & 4  \\ 
\hline
5 & q^{10} - q^9 - q^7+ q^6 +  q^4 - q^3 - q + 1 & 8   \\ 
\hline
6 & q^{12} - q^{11} + q^7 - 2 q^6 + q^5 - q + 1 &  0 \\ 
\hline
7 & q^{14} - q^{13} - q^{10} + q^9 + q^5 - q^4 - q + 1  & 0  \\ 
\hline
8 &  q^{16} - q^{15} - q + 1 &  4 \\ 
\hline
9 &  q^{18} - q^{17} - q^{13} + q^{12} + q^{11} -  q^{10} -  q^8 + q^7+ q^6 - q^5 - q + 1  & 4 \\
\hline
10 &  q^{20} - q^{19} - q^{11} + 2q^{10} -  q^9 - q + 1  & 8 \\
\hline
11 &  q^{22} - q^{21} - q^{16} + q^{15} + q^7 - q^6 - q + 1  & 0 \\
\hline
12 &  q^{24} - q^{23} + q^{15} - q^{14} -  q^{10} + q^9 - q + 1  & 0 \\
\hline
\end{array}
\]
\end{table}

{\footnotesize
\begin{table}[ht]
\caption{\emph{The polynomials $P_n(q)$}}\label{tableP}
\renewcommand\arraystretch{1.3}
\noindent\[
\begin{array}{|c||c|c|c|c|c|c|}
\hline
n & P_n(q) & P_n(1) & P_n(-1) & |P_n(j)| & |P_n(\ii)| & a_{n,0}\\
\hline\hline
1 & 1 & 1 & 1 & 1 & 1 & 1 \\ 
\hline
2 & q^2 + q + 1 & 3 & 1 & 0 & 1 & 1 \\
\hline
3 & q^4 + q^3 + q + 1 & 4 & 0 & 2 & 2 & 0 \\ 
\hline
4 & q^6 + q^5+ q^4 + q^3 + q^2 + q + 1 & 7 & 1 & 1 & 1 & 1 \\ 
\hline
5 & q^8 + q^7+ q^6 + q^2 + q + 1 & 6 & 2 & 0 & 0 & 0 \\ 
\hline
 & q^{10} + q^9 + q^8 + q^7+ q^6  &  &  &  &  & \\ 
6 & + 2q^5+ q^4 + q^3 + q^2 + q + 1& 12 & 0 & 0 & 2 & 2 \\ 
\hline
7 &  q^{12} + q^{11} + q^{10} + q^9 + q^3 + q^2 + q + 1 & 8 & 0 & 2 & 0 & 0 \\ 
\hline
 &  q^{14} + q^{13} + q^{12} + q^{11} + q^{10} + q^9 + q^8   &  &  &  & & \\ 
8 &  + q^7 + q^6 + q^5+ q^4 + q^3 + q^2 + q + 1 & 15 & 1 & 0 & 1 & 1\\ 
\hline
 &  q^{16} + q^{15}+ q^{14} + q^{13} + q^{12} + q^9    &  & &  &  & \\ 
9 &  + q^8 + q^7+ q^4 + q^3 + q^2 + q + 1  & 13 & 1 & 2 & 3 & 1 \\
\hline
 &  q^{18} + q^{17}+ q^{16} + q^{15}+ q^{14} + q^{13}   &  & &  &  & \\ 
  &  + q^{12} + q^{11}  + q^{10} + q^8 + q^7+ q^6   &  & &  &  & \\
10 &   + q^5 + q^4 + q^3 + q^2 + q + 1  & 18 & 2 & 0 & 0 & 0 \\
\hline
 &  q^{20} + q^{19}+ q^{18} + q^{17}+ q^{16} + q^{15}    &  & &  &  & \\ 
11 &   + q^5 + q^4 + q^3 + q^2 + q + 1  & 12 & 0 & 0 & 2 & 0 \\
\hline
 &  q^{22} + q^{21}+ q^{20} + q^{19}+ q^{18} + q^{17}  + q^{16} + q^{15} &  & &  &  & \\ 
 &  + q^{14} + 2 q^{13} + 2 q^{12} +  2q^{11}  + 2q^{10}  + 2 q^9 + q^8  &  & &  &  & \\ 
12 &   + q^7+ q^6 + q^5 + q^4 + q^3 + q^2 + q + 1  & 28 & 0 & 2 & 2 & 2 \\
\hline
\end{array}
\]
\end{table}
}

The present paper, which complements\,\cite{KRidealcount}, is organized as follows.
In Section\,\ref{sec-applic} we compute the local zeta function of
the Hilbert scheme of $n$~points on a two-dimensional torus.
Section\,\ref{sec-proofs} is devoted to the proofs of Theorems\,\ref{th-cni},\,\ref{th-ani} and\,\ref{th-omega}.
In Section\,\ref{sec-misc} we collect additional results on the polynomials~$C_n(q)$ and $P_n(q)$.
Finally, in Appendix\,\ref{sec-Somos} we give a proof of a result by Somos needed in Section\,\ref{ssec-pf-omega}
for the proof of Theorem\,\ref{th-omega}.

\section{The local zeta function of the Hilbert scheme of $n$~points on a two-dimensional torus}\label{sec-applic}

Let $k$ be a field and $n$ be a positive integer.
The algebra~$k[x,y,x^{-1},y^{-1}]$ of Laurent polynomials with coefficients in~$k$ 
is the coordinate ring of the two-dimensional torus $(\AA^1_k \setminus \{0\}) \times (\AA^1_k \setminus \{0\})$.
As is well known, the ideals of codimension~$n$ of the Laurent polynomial algebra~$k[x,y,x^{-1},y^{-1}]$
are in bijection with the $k$-points of the Hilbert scheme
\[
H^n_k = \Hilb^n \left( \left( \AA^1_k \setminus \{0\} \right) \times \left(\AA^1_k \setminus \{0\} \right) \right)
\]
parametrizing finite subschemes of co\-length~$n$ of the two-dimensional torus.
This scheme is a quasi-projective variety. 

As an application of Theorem\,\ref{th-cni}, we now give an explicit expression for the 
local zeta function of the Hilbert scheme~$H^n_{\FF_q}$. Since the scheme is quasi-projective, 
its local zeta function is by\,\cite{Dw, Gr} a rational function.

Recall that the \emph{local zeta function} of an algebraic variety~$X$ defined over a finite field~$\FF_q$ of cardinality~$q$
is the formal power series
\begin{equation}\label{def-Z}
Z_{X/\FF_q}(t) = \exp\left(\sum_{m\geq 1}\, |X(\FF_{q^m})| \, \frac{t^m}{m} \right),
\end{equation}
where $|X(\FF_{q^m})|$ is the number of points of~$X$ over the degree~$m$ extension~$\FF_{q^m}$ of~$\FF_q$.

\begin{theorem}\label{th-mainZ}
The local zeta function of the Hilbert scheme~$H^n_{\FF_q}$ is the rational function
\begin{equation*}
Z_{H^n_{\FF_q}/\FF_q}(t) = 
\frac{1}{(1-q^nt)^{c_{n,0}}} \, \prod_{i=1}^n \, \frac{1}{[(1-q^{n+i}t)(1-q^{n-i}t)]^{c_{n,i}}} \, ,
\end{equation*}
where the exponents $c_{n,i}$ are the integers determined in Theorem\,\ref{th-cni}.
\end{theorem}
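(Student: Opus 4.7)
The plan is to express $|H^n_{\FF_q}(\FF_{q^m})|$ in closed form using the polynomial~$C_n$, and then to feed the explicit coefficients provided by Theorem\,\ref{th-cni} into the exponential defining~$Z$.

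First, the bijection between ideals of codimension~$n$ of $\FF_{q^m}[x,y,x^{-1},y^{-1}]$ and $\FF_{q^m}$-points of the Hilbert scheme~$H^n_{\FF_q}$, recalled at the opening of this section, is valid over every finite extension of~$\FF_q$. Combined with Formula\,\eqref{formula-Cnq} of \cite[Th.\,1.1]{KRidealcount}, which interprets $C_n(q^m)$ as the number of such ideals, this yields the key identity
\[
|H^n_{\FF_q}(\FF_{q^m})| = C_n(q^m) \qquad (m \geq 1).
\]

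Second, by the palindromic expansion\,\eqref{def-cni} applied with $q$ replaced by $q^m$, one has
\[
C_n(q^m) = c_{n,0}\,q^{nm} + \sum_{i=1}^n c_{n,i}\bigl(q^{(n+i)m} + q^{(n-i)m}\bigr).
\]
Substituting into the defining series\,\eqref{def-Z} and exchanging the two summations yields
\[
\sum_{m\geq 1} |H^n_{\FF_q}(\FF_{q^m})|\,\frac{t^m}{m}
= c_{n,0}\sum_{m\geq 1}\frac{(q^n t)^m}{m}
+ \sum_{i=1}^n c_{n,i}\sum_{m\geq 1}\frac{(q^{n+i}t)^m + (q^{n-i}t)^m}{m}.
\]

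Third, recognizing each inner sum as $-\log(1-\alpha t)$ for the appropriate $\alpha$, the right-hand side becomes
\[
-c_{n,0}\log(1-q^n t) - \sum_{i=1}^n c_{n,i}\bigl(\log(1-q^{n+i}t) + \log(1-q^{n-i}t)\bigr).
\]
Exponentiating gives exactly the product appearing in the statement of Theorem\,\ref{th-mainZ}.

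The only conceptual point requiring care is the first step, namely the equality $|H^n_{\FF_q}(\FF_{q^m})| = C_n(q^m)$; everything after that is a routine manipulation of the exponential of a logarithm. In particular, one should emphasize that the result of \cite[Th.\,1.1]{KRidealcount} establishes~$C_n$ as a universal polynomial whose value at the cardinality of \emph{any} finite field equals the count of codimension-$n$ ideals in the Laurent polynomial ring over that field, so that no extra base-change argument is needed. There is no substantive obstacle beyond making this observation explicit; the heart of the statement is already contained in Theorem\,\ref{th-cni}.
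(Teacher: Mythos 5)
Your proposal is correct and follows essentially the same route as the paper: the paper verifies that the proposed product has logarithmic derivative $\sum_{m\geq 1} C_n(q^m)t^m$, which is just your exponentiation step read in reverse, and both arguments rest on the identity $|H^n_{\FF_q}(\FF_{q^m})| = C_n(q^m)$ together with the palindromic expansion\,\eqref{def-cni}. Your explicit remark that $C_n$ is a universal polynomial in the field cardinality (so no separate base-change argument is needed for the extensions $\FF_{q^m}$) is a point the paper leaves implicit, and it is worth making.
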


Let us display a few examples of such rational functions.
For $n = 3$, $5$,~$6$, we have
\begin{equation*}
Z_{H^3_{\FF_q}/\FF_q}(t) = 
\frac{(1-qt)(1-q^2t)(1-q^4t)(1-q^5t)}{(1-t)(1-q^3t)^2(1-q^6t)} \, ,
\end{equation*}
\begin{equation*}
Z_{H^5_{\FF_q}/\FF_q}(t) = 
\frac{(1-qt)(1-q^3t)(1-q^7t)(1-q^9t)}{(1-t)(1-q^4t)(1-q^6t)(1-q^{10}t)} \, ,
\end{equation*}
\begin{equation*}
Z_{H^6_{\FF_q}/\FF_q}(t) = 
\frac{(1-qt)(1-q^6t)^2(1-q^{11}t)}{(1-t)(1-q^5t)(1-q^7t)(1-q^{12}t)} \, .
\end{equation*}

\begin{proof}[Proof of Theorem\,\ref{th-mainZ}]
Let $Z(t)$ be the RHS of the equality in the theorem. 
Clearly, $Z(0) = 1$. By\,\eqref{def-Z} it remains to check that
\begin{equation*}
t\frac{Z'(t)}{Z(t)} = \sum_{m\geq 1}\, C_n(q^m) t^m.
\end{equation*}
Now 
\begin{eqnarray*}
t\frac{Z'(t)}{Z(t)} 
& = & c_{n,0} \frac{q^nt}{1-q^nt} + \sum_{i=1}^n \, c_{n,i} \left(\frac{q^{n+i}t}{1-q^{n+i}t} + \frac{q^{n-i}t}{1-q^{n-i}t}\right) \\
& = & \sum_{m\geq 1} \, c_{n,0}\, q^{mn}t^m +  
\sum_{i=1}^n \, c_{n,i} \sum_{m\geq 1} \, \left( q^{m(n+i)} + q^{m(n-i)} \right) t^m\\
& = &\sum_{m\geq 1}\, C_n(q^m) t^m.
\end{eqnarray*}
\end{proof}

As one immediately sees, the formula for~$Z_{H^n_{\FF_q}/\FF_q}(t)$ has a striking symmetry, which we express as follows.

\begin{coro}\label{coro-Z}
The local zeta function~$Z_{H^n_{\FF_q}/\FF_q}(t)$ satisfies the functional equation
\begin{equation*}
Z_{H^n_{\FF_q}/\FF_q}\left( \frac{1}{q^{2n}t} \right) = Z_{H^n_{\FF_q}/\FF_q}(t).
\end{equation*}
\end{coro}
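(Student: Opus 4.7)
The plan is to prove the functional equation by direct substitution in the explicit product formula of Theorem~\ref{th-mainZ}. Re-indexing the factors by $k\in\{-n,\dots,n\}$, one can write
\begin{equation*}
Z_{H^n_{\FF_q}/\FF_q}(t) \;=\; \prod_{k=-n}^{n} \bigl(1-q^{n+k}t\bigr)^{-c_{n,|k|}},
\end{equation*}
where $c_{n,|k|}$ depends only on $|k|$ (this is the palindromicity of $C_n(q)$ encoded in~\eqref{def-cni}). Under the substitution $t\mapsto 1/(q^{2n}t)$, each factor transforms via the elementary identity
\begin{equation*}
1 - \frac{q^{n+k}}{q^{2n}t} \;=\; -\,\frac{q^{k-n}}{t}\,\bigl(1 - q^{n-k}t\bigr),
\end{equation*}
so that the family of factors is permuted by $k\leftrightarrow -k$ (with the same exponents $c_{n,|k|}$ attached, since $c_{n,|k|}=c_{n,|-k|}$), up to a monomial prefactor which must then be tracked.

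Collecting these prefactors, one obtains an equation of the shape
\begin{equation*}
Z_{H^n_{\FF_q}/\FF_q}\!\bigl(1/(q^{2n}t)\bigr) \;=\; (-1)^{-S}\,q^{R}\,t^{S}\; Z_{H^n_{\FF_q}/\FF_q}(t),
\end{equation*}
with $S=\sum_{k=-n}^{n} c_{n,|k|} = c_{n,0} + 2\sum_{i=1}^{n} c_{n,i}$ and $R=\sum_{k=-n}^{n}(n-k)\,c_{n,|k|}$. The sum $S$ equals $C_n(1)$, which vanishes by item~(ii) of the introduction (the divisibility of $C_n(q)$ by $(q-1)^2$). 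For the exponent of~$q$, the decomposition $R = nS - \sum_k k\,c_{n,|k|}$ shows that $R$ vanishes as well: the first term equals $nS=0$, and the second is zero by the antisymmetry of $k\mapsto -k$ acting on $k\,c_{n,|k|}$. Hence the scalar prefactor is~$1$, which yields the desired functional equation.

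The main (and really only) point requiring care is the sign and exponent bookkeeping in the substitution; there is no conceptual obstacle once one jointly exploits the palindromic symmetry $c_{n,i}=c_{n,-i}$ and the vanishing $C_n(1)=0$. One could alternatively avoid the re-indexing by pairing the factors $(1-q^{n+i}t)^{-c_{n,i}}$ and $(1-q^{n-i}t)^{-c_{n,i}}$ from the start and observing that the substitution $t\mapsto 1/(q^{2n}t)$ preserves each such pair up to the monomial $q^{2n i}t^{2}$, but either organization leads to the same short verification.
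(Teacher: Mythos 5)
Your proof is correct and follows essentially the same route as the paper's: substitute $t\mapsto 1/(q^{2n}t)$ into the product formula of Theorem~\ref{th-mainZ}, use the palindromic symmetry $c_{n,|k|}=c_{n,|-k|}$ to permute the factors, and kill the resulting monomial prefactor via $C_n(1)=0$. The only (cosmetic) difference is that by noting the total sign exponent equals $S=C_n(1)=0$ you bypass the paper's separate appeal to the evenness of $c_{n,0}$ from Theorem~\ref{th-cni}\,(a), though the two observations are equivalent since $S-c_{n,0}$ is even.
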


Such functional equations exist for smooth projective schemes (see for instance \cite[(2.6)]{De}).
Here we obtained one despite the fact that the Hilbert scheme~$H^n$ is not projective.

\begin{proof}
Using the expression for $Z_{H^n/\FF_q}(t)$ in Theorem\,\ref{th-mainZ}, we have
\begin{eqnarray*}
Z_{H^n/\FF_q}\left( \frac{1}{q^{2n}t} \right)
& = & 
\frac{1}{(1-q^{-n}t^{-1})^{c_{n,0}}} \, \prod_{i=1}^n \, \frac{1}{[(1-q^{-n+i}t^{-1})(1-q^{-n-i}t^{-1})]^{c_{n,i}}} \\
& = & 
\frac{(q^nt)^{c_{n,0}}}{(q^nt-1)^{c_{n,0}}} \, 
\prod_{i=1}^n \, \frac{(q^{n-i}t)^{c_{n,i}}(q^{n+i}t)^{c_{n,i}}}{[(q^{n-i}t-1)(q^{n+i}t-1)]^{c_{n,i}}} \\
& = & (-1)^{c_{n,0}}
\frac{(q^nt)^{c_{n,0}+ 2\sum_{i=1}^n  c_{n,i}}}{(1-q^nt)^{c_{n,0}}} \, \prod_{i=1}^n \, \frac{1}{[(1-q^{n-i}t)(1-q^{n+i}t)]^{c_{n,i}}} \\
& = & (-1)^{c_{n,0}}
\frac{(q^nt)^{C_n(1)}}{(1-q^nt)^{c_{n,0}}} \, \prod_{i=1}^n \, \frac{1}{[(1-q^{n+i}t)(1-q^{n-i}t)]^{c_{n,i}}} \, .
\end{eqnarray*}
This latter is equal to $Z_{H^n/\FF_q}(t)$ in view of the vanishing of~$C_n(1)$ and of the fact that all integers~$c_{n,0}$
are even (see Theorem\,\ref{th-cni}\,(a)).
\end{proof}

With Theorem~\ref{th-mainZ} we can easily compute the \emph{Hasse--Weil zeta function} of the above Hilbert scheme; 
this zeta function is defined by
\begin{equation}\label{def-HWZ}
\zeta_{H^n}(s) = \prod_{p \; \text{prime}} \, Z_{{H^n_{\FF_p}}/\FF_p}(p^{-s}),
\end{equation}
where the product is taken over all prime integers~$p$.
It follows from Theorem~\ref{th-mainZ} that the Hasse--Weil zeta function of~$H^n$ is given by
\begin{equation*}
\zeta_{H^n}(s) = \zeta(s-n)^{c_{n,0}} \prod_{i=1}^n \,   \left[ \zeta(s-n-i) \zeta(s-n+i) \right]^{c_{n,i}},
\end{equation*}
where $\zeta(s)$ is Riemann's zeta function.
As a consequence of \eqref{def-HWZ} and of Corollary\,\ref{coro-Z},  
the Hasse--Weil zeta function $\zeta_{H^n}(s)$ satisfies the functional equation
\begin{equation*}
\zeta_{H^n}(s) = \zeta_{H^n}(2n-s).
\end{equation*}

\section{Proofs}\label{sec-proofs}

We now give the proofs of the results stated in the introduction.
We start with the proof of Theorem\,\ref{th-ani}.

\subsection{Proof of Theorem\,\ref{th-ani}}\label{ssec-pf-ani}

We first establish the positivity of the coefficients of~$P_n(q)$, then we compute them.

(a) \emph{(Positivity)}
Replacing first $C_n(q)$ by $(q-1)^2 P_n(q)$, 
then the variables~$q$ by~$q^2$ and $t$ by~$t^2$ in\,\eqref{gf-Cnq}, we obtain
\begin{equation*}
1 + (q^2-1)^2 \, \sum_{n\geq 1} \, \frac{P_n(q^2)}{q^{2n}}  \, t^{2n}
= \prod_{n \geq 1}\, \frac{(1-t^{2n})^2}{(1-q^2t^{2n})(1-q^{-2} t^{2n})}\,  .
\end{equation*}
By \cite[Eq.\,(10.1)]{Fi}, 
\begin{multline*}
\frac{2q}{1-q^2}\, \prod_{n \geq 1}\, \frac{(1-t^{2n})^2}{(1-q^2t^{2n})(1-q^{-2} t^{2n})} \\
= \frac{2q}{1-q^2} + \sum_{k,m \geq 1} \, \left( (-1)^m - (-1)^k \right) q^{k-m} t^{km}.
\end{multline*}
Therefore,
\begin{equation*}
1 + (q^2-1)^2 \, \sum_{n\geq 1} \, \frac{P_n(q^2)}{q^{2n}}  \, t^{2n}
= 1 + \frac{1-q^2}{2q}  \sum_{k,m \geq 1} \, \left( (-1)^m - (-1)^k \right) q^{k-m} \, t^{km} .
\end{equation*}
Equating the coefficients of~$t^{2n}$ for $n\geq 1$, we obtain
\begin{equation*}
\frac{P_n(q^2)}{q^{2n}} = \frac{1}{2q(1-q^2)} \sum_{k,m \geq 1 \atop km = 2n} \, \left( (-1)^m - (-1)^k \right) q^{k-m}.
\end{equation*}
Now the integer $(-1)^m - (-1)^k$ vanishes if $m, k$ are of the same parity, 
is equal to~$2$ if $m$ is even and $k$ is odd, and is equal to~$-2$ if $m$ is odd and $k$ is even.
In view of this remark, we have
\begin{eqnarray*}
P_n(q^2) 
& = & \frac{q^{2n-1}}{2(1-q^2)} \sum_{1 \leq k < m \atop km = 2n} \, 
\left( \left( (-1)^m - (-1)^k \right) q^{k-m} + \left( (-1)^k - (-1)^m \right) q^{m-k}\right) \\
& = & \frac{1}{2(1-q^2)} \sum_{1 \leq k < m \atop km = 2n} \, \left( (-1)^m - (-1)^k \right) q^{2n-1} q^{k-m} (1 - q^{2m - 2k}) \\
& = & \sum_{1 \leq k < m \atop km = 2n} \, \frac{(-1)^m - (-1)^k}{2}  \, q^{2n-1+k-m}  \frac{1 - q^{2m - 2k}}{1-q^2} \\
& = & \sum_{1 \leq k < m \atop km = 2n} \, \frac{(-1)^m - (-1)^k}{2}  \, q^{2n-1+k-m}  
\left( \sum_{j=0}^{m-k-1} \, q^{2j} \right) .
\end{eqnarray*} 
We now restrict this sum to the pairs $(k,m)$ of different parity, in which case $((-1)^m - (-1)^k)/2 = (-1)^{k-1}$.
Using the notation $H(I) = \sum_{2k \in I} \, q^{2k}$ for any interval $I \subset \NN$, we obtain
\begin{equation}\label{eq-positive}
P_n(q^2) = \sum_{1 \leq k < m, \; km = 2n \atop k\not\equiv m \!\!\pmod 2} \, (-1)^{k-1} \, H\left( [2n-1+k-m, 2n-3 + m -k] \right) .
\end{equation}
Note that the intervals are all centered around~$0$ and their bounds are even.

Now the only contributions in the previous sum which may pose a positivity problem occur when $k$ is even; 
so in this case write $k = 2^{\alpha} k'$ for some integer~$\alpha \geq 1$ and some odd integer $k'\geq 1$.
Setting $m' = 2^{\alpha} m$, we see that $k'$ is odd, $m'$ is even, $k'm' = km = 2n$ and $1 \leq k' < k < m < m'$.
Moreover, 
\begin{equation*}
[2n-1+k-m, 2n-3 + m -k] \subsetneq [2n-1+k'-m', 2n-3 + m' -k'] .
\end{equation*}
Observe that the function $k \mapsto k' = 2^{-\alpha}k$ is injective. 
It follows that the negative contributions in the sum\,\eqref{eq-positive} corresponding to even~$k$ are counterbalanced by
the positive contributions corresponding to odd~$k'$. This shows that $P_n(q^2)$, hence~$P_n(q)$,
has only non-negative coefficients.

(b) \emph{(Computation of the coefficients of~$P_n(q)$)}
Using\,\eqref{eq-positive}, we immediately derive
\begin{equation}\label{eq-positive2}
\frac{P_n(q)}{q^{n-1}} = 
\sum_{1 \leq k < m, \; km = 2n \atop k\not\equiv m \!\!\pmod 2} \, (-1)^{k-1} \, \left[\left[-\frac{m -k - 1}{2}, \frac{m -k - 1}{2} \right]\right] ,
\end{equation}
where $[[a,b]]$ stands for $q^a + q^{a+1} + \cdots + q^b$ (for two integers $a \leq b$).
Observe that $m-k-1$ is even so that the bounds in the RHS of\,\eqref{eq-positive2} are integers.
It follows from this expression of~$P_n(q)$ that $q^{-(n-1)}P_n(q)$ is invariant under the map $q \mapsto q^{-1}$.
This shows that the degree~$2n-2$ polynomial~$P_n(q)$ is palindromic 
(which we had already pointed out in the introduction as the infinite product 
in\,\eqref{gf-Cnq} is invariant under the transformation $q \mapsto q^{-1}$).
Therefore, 
\begin{equation}\label{eq-P-a}
\frac{P_n(q)}{q^{n-1}} = a_{n,0} + \sum_{i=1}^{n-1} \, a_{n,i} \, (q^i + q^{-i})
\end{equation}
for some non-negative integers~$a_{n,i}$, which we now determine. 
We assert the following.

\begin{prop}\label{prop-gf-a}
For any integer $i\geq 0$, we have
\begin{equation*}
\sum_{n\geq 1} \, a_{n,i} \, t^n =
\sum_{k\geq 1} \, (-1)^{k-1}\, \frac{t^{k(k+1)/2} \, t^{ki}}{1 - t^k} \, .
\end{equation*}
\end{prop}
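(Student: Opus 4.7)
The plan is to read $a_{n,i}$ directly off the explicit expression for $P_n(q)/q^{n-1}$ given by~\eqref{eq-positive2}, and then match the resulting combinatorial sum with the coefficient of $t^n$ on the right-hand side of the proposition by a straightforward reparameterization of indices.

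The first step is to observe that in~\eqref{eq-positive2} the bracket $[[-(m-k-1)/2,\,(m-k-1)/2]]$ contributes $1$ to the coefficient of $q^i$ precisely when $0 \leq i \leq (m-k-1)/2$, and $0$ otherwise. Combined with the palindromic expansion~\eqref{eq-P-a}, this yields
\begin{equation*}
a_{n,i} \;=\; \sum_{\substack{k,\,m \geq 1,\; km = 2n \\ k \not\equiv m \!\pmod 2 \\ m \,\geq\, k + 2i + 1}} (-1)^{k-1}
\end{equation*}
for every $i \geq 0$; here the bound $m \geq k+2i+1$ already subsumes the inequality $k < m$ appearing in~\eqref{eq-positive2}.

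The second step is to expand the right-hand side of the proposition by geometric series:
\begin{equation*}
\sum_{k\geq 1}(-1)^{k-1}\,\frac{t^{k(k+1)/2+ki}}{1-t^k} \;=\; \sum_{k \geq 1}\sum_{j \geq 0}(-1)^{k-1}\,t^{k(k+1)/2 + ki + jk}.
\end{equation*}
The coefficient of $t^n$ is thus a sum of $(-1)^{k-1}$ over pairs $(k,j)$ with $k \geq 1$, $j \geq 0$, and $2n = k\bigl(k + 1 + 2i + 2j\bigr)$. Setting $m := k + 1 + 2i + 2j$ provides a bijection from such pairs $(k,j)$ to the pairs $(k,m)$ indexing the sum displayed above: the equality $km = 2n$ holds by construction, while $m-k = 2i+1+2j$ is automatically odd (encoding the parity condition $k \not\equiv m \pmod 2$) and at least $2i+1$. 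This identifies the two expressions and concludes the argument.

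I do not expect a genuine obstacle here. All of the analytic content — the passage from the infinite product~\eqref{gf-Cnq} to an explicit coefficient sum — has already been carried out in deriving~\eqref{eq-positive2}, so what remains is pure bookkeeping. The only delicate point is the parity condition $k \not\equiv m \pmod 2$, but this is absorbed cleanly and automatically through the substitution $m = k + 1 + 2i + 2j$.
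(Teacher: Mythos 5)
Your proposal is correct and follows essentially the same route as the paper: read $a_{n,i}$ off \eqref{eq-positive2} as a signed sum over divisor pairs $(k,m)$ of $2n$ with $m-k\geq 2i+1$ and $k\not\equiv m\pmod 2$, expand the right-hand side as a geometric series, and match the two via the substitution $m=k+2i+2j+1$. No gaps; the bookkeeping (including the parity condition being absorbed by the substitution) is exactly as in the paper's proof.
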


\begin{proof}
The coefficient $a_{n,i}$ is the coefficient of~$q^i$ in~$q^{-(n-1)}P_n(q)$. 
It follows from \eqref{eq-positive2} that 
\begin{equation*}
a_{n,i} = \sum_{A(n,i)}\, (-1)^{k-1},
\end{equation*}
where $A(n,i)$ is the set of integers $k$ such that $km = 2n$, $1 \leq k < m$, $k\not\equiv m \pmod 2$, and 
$(m-k-1)/2 \geq i$ (the latter condition is equivalent to $m-k \geq 2i+1$). 
Now the RHS in the proposition is equal to
\begin{equation*}
\sum_{k\geq 1} \, (-1)^{k-1}\, t^{k(k+1)/2} \, t^{ki} \sum_{j\geq 0}\, t^{kj}
= \sum_{k\geq 1} \, (-1)^{k-1}\, \sum_{j\geq 0}\, t^{k(k+2i+2j +1)/2}.
\end{equation*}
Thus the coefficient of $q ^i$ in the latter expression is equal to
\begin{equation*}
\sum_{k\geq 1, \, j\geq 0\atop k(k+2i+2j+1)= 2n} \, (-1)^{k-1} .
\end{equation*}
Setting $m = k+2i+2j+1$, we see that the previous sum is the same as the one indexed by the set~$A(n,i)$ above.
\end{proof}

Note the following consequence of \eqref{eq-P-a} and of Proposition~\ref{prop-gf-a}.

\begin{coro}
We have
\begin{equation*}
\sum_{n\geq 1} \, \frac{P_n(q)}{q^{n-1}} \, t^n =
\sum_{k\geq 1} \, (-1)^{k-1}\, t^{k(k+1)/2} \,\frac{1 + t^k}{(1 - qt^k)(1 - q^{-1}t^k)} \, .
\end{equation*}
\end{coro}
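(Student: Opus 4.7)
The plan is to take the palindromic expansion $P_n(q)/q^{n-1} = a_{n,0} + \sum_{i \geq 1} a_{n,i}(q^i + q^{-i})$ from \eqref{eq-P-a}, multiply by $t^n$, sum over $n \geq 1$, and swap the order of summation so that Proposition~\ref{prop-gf-a} can be applied to the inner sum. This yields
\begin{equation*}
\sum_{n \geq 1} \frac{P_n(q)}{q^{n-1}} t^n
= \sum_{k \geq 1} (-1)^{k-1} \frac{t^{k(k+1)/2}}{1-t^k} \Bigl(1 + \sum_{i \geq 1}(q^i + q^{-i}) t^{ki}\Bigr),
\end{equation*}
where I have pulled the $k$-indexed factor outside and used that the formula of Proposition~\ref{prop-gf-a} also covers the case $i=0$ (giving the constant term $a_{n,0}$).

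Next I would compute the parenthesized factor in closed form. Writing it as $1 + \sum_{i \geq 1}(qt^k)^i + \sum_{i \geq 1}(q^{-1}t^k)^i$ and summing the two geometric series gives
\begin{equation*}
1 + \frac{qt^k}{1-qt^k} + \frac{q^{-1}t^k}{1-q^{-1}t^k}.
\end{equation*}
Putting these three terms over the common denominator $(1-qt^k)(1-q^{-1}t^k)$, the numerator simplifies to $1 - t^{2k} = (1-t^k)(1+t^k)$, so the parenthesized factor equals $(1-t^k)(1+t^k)/[(1-qt^k)(1-q^{-1}t^k)]$. Cancelling the $(1-t^k)$ against the denominator $1 - t^k$ outside gives exactly the right-hand side of the corollary.

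There is no real obstacle here: the two nontrivial ingredients (the palindromic expansion and the single-$i$ generating function) are already established, and the remainder is just a geometric-series manipulation. The only point requiring minor care is justifying the swap of summations and confirming that Proposition~\ref{prop-gf-a} applies at $i = 0$ as well; both are immediate since all series are formal power series in $t$ with coefficients that are Laurent polynomials in $q$, and the statement of Proposition~\ref{prop-gf-a} is explicitly given for $i \geq 0$.
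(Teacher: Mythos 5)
Your proof is correct and is exactly the argument the paper intends: the corollary is stated there as an immediate consequence of \eqref{eq-P-a} and Proposition~\ref{prop-gf-a}, and your geometric-series computation (with numerator simplifying to $1-t^{2k}$ and the factor $1-t^k$ cancelling) fills in the routine details.
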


To compute the coefficients~$a_{n,i}$, we adapt the proof in\,\cite{CESM}.
Using Proposition\,\ref{prop-gf-a} and separating the even indices~$k$ from the odd ones, we have
\begin{eqnarray*}
\sum_{n\geq 1} \, a_{n,i} \, t^n 
& = & 
\sum_{k\; \text{odd}\, \geq 1} \,  \frac{t^{k(k+2i+1)/2} }{1 - t^k} 
- \sum_{k\geq 1} \,  \frac{t^{k(2k+2i+1)} }{1 - t^{2k}} \\
& = & \sum_{k\; \text{odd}\, \geq 1} \,  \frac{t^{k(k+2i+1)/2} }{1 - t^k}
- \sum_{k\geq 1} \,  \frac{t^{k(2k+2i+1)} (1+t^k)}{1 - t^{2k}} 
+  \sum_{k\geq 1} \,  \frac{t^{k(2k+2i+2)} }{1 - t^{2k}}\\
& = &\sum_{k\; \text{odd}\, \geq 1} \,  \frac{t^{k(k+2i+1)/2} }{1 - t^k} 
+  \sum_{k\; \text{even}\, \geq 1} \,  \frac{t^{k(k+2i+2)/2} }{1 - t^k}
- \sum_{k\geq 1} \,  \frac{t^{k(2k+2i+1)}}{1 - t^k} \\
& = &   \sum_{k\geq 1} \, \frac{t^{k(\lfloor k/2 \rfloor+i+1)} }{1 - t^k} 
- \sum_{k\geq 1} \,  \frac{t^{k(2k+2i+1)}}{1 - t^k}
\end{eqnarray*}
since
\begin{equation*}
\lfloor k/2 \rfloor+i+1 = 
\begin{cases}
(k+2i+1)/2 \quad \text{if} \; k \; \text{is odd,} \\
(k+2i+2)/2 \quad \text{if} \; k \; \text{is even.}
\end{cases}
\end{equation*}

Using the inequality $\lfloor k/2 \rfloor+i+1 < 2k+2i+1$ and the identity
\begin{equation*}
\frac{t^{ka}}{1-t^k} - \frac{t^{kb}}{1-t^k}
= \sum_{a \leq d <b} \, t^{kd} \qquad\qquad (a < b)
\end{equation*}
in the special case $a = \lfloor k/2 \rfloor+i+1$ and $b = 2k+2i+1$, we obtain
\begin{equation*}
\sum_{n\geq 1} \, a_{n,i} \, t^n 
= \sum_{k\geq 1}\, \sum_{\lfloor k/2 \rfloor+i+1 \leq d < 2k+2i+1} \,  t^{kd} .
\end{equation*}
Therefore, $a_{n,i}$ is the number of divisors~$d$ of~$n$ such that 
\begin{equation*}
\left\lfloor \frac{n}{2d} \right\rfloor+i+1 \leq d <  \frac{2n}{d}+2i+1.
\end{equation*}
The leftmost inequality is equivalent to $n/(2d) + i < d$, which is equivalent to $d^2 - id -n/2 > 0$,
which in turn is equivalent to $d> (i + \sqrt{2n+i^2})/2$. 
On the other hand, the rightmost inequality is equivalent to $d \leq 2n/d + 2i$, 
which is equivalent to $d^2 - 2id -2n \leq 0$, which in turn is equivalent to $d \leq i + \sqrt{2n+i^2}$.

This completes the proof of Theorem\,\ref{th-ani}.

\subsection{Proof of Theorem\,\ref{th-cni}}\label{ssec-pf-cni}

Let $c_{n,i}$ be the coefficients of~$C_n(q)$ as defined by\,\eqref{def-cni}.
We claim the following.

\begin{prop}\label{prop-gf-c}
We have
\begin{equation*}
\sum_{n\geq 1} \, c_{n,0} \, t^n 
= 2 \sum_{k\geq 1} \, (-1)^k\, t^{k(k+1)/2} ,
\end{equation*}
and for any integer $i\geq 1$, we have
\begin{equation*}
\sum_{n\geq 1} \, c_{n,i} \, t^n =
\sum_{k\geq 1} \, (-1)^k \, \left( t^{k(k+2i+1)/2}  - t^{k(k+2i-1)/2} \right) .
\end{equation*}
\end{prop}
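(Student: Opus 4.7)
The plan is to derive Proposition~\ref{prop-gf-c} as a direct formal consequence of Proposition~\ref{prop-gf-a}, using the relation $C_n(q) = (q-1)^2 P_n(q)$. Since $(q-1)^2 = q(q + q^{-1} - 2)$, we have
$$\frac{C_n(q)}{q^n} = (q + q^{-1} - 2)\, \frac{P_n(q)}{q^{n-1}},$$
so the coefficient structure of $C_n$ is obtained by applying a simple shift-and-subtract operator to that of $P_n$.

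First, I would extract the linear relations between the $c_{n,i}$ and the $a_{n,j}$. Writing $P_n(q)/q^{n-1} = \sum_{j \in \ZZ} \tilde a_{n,j}\, q^j$ with $\tilde a_{n,j} = a_{n,|j|}$, multiplication by $q + q^{-1} - 2$ sends this to $\sum_i (\tilde a_{n,i-1} + \tilde a_{n,i+1} - 2\tilde a_{n,i})\, q^i$. Matching with the palindromic expansion \eqref{def-cni} yields
$$c_{n,0} = 2(a_{n,1} - a_{n,0}), \qquad c_{n,i} = a_{n,i-1} + a_{n,i+1} - 2 a_{n,i} \quad (i \geq 1),$$
with the convention that $a_{n,|i-1|}$ is read as $a_{n,0}$ when $i = 1$.

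Next, I would substitute the generating functions from Proposition~\ref{prop-gf-a} into these relations. For the central coefficient, the common factor $1/(1-t^k)$ cancels immediately against $t^k - 1$:
$$\sum_{n\geq 1} c_{n,0}\, t^n = 2\sum_{k\geq 1}(-1)^{k-1}\frac{t^{k(k+1)/2}(t^k - 1)}{1-t^k} = 2\sum_{k\geq 1}(-1)^{k} t^{k(k+1)/2}.$$
For $i \geq 1$, the crucial algebraic observation is
$$t^{k(i-1)} + t^{k(i+1)} - 2\, t^{ki} = t^{k(i-1)}(1-t^k)^2,$$
so one factor of $(1-t^k)$ cancels the denominator, leaving
$$\sum_{n\geq 1} c_{n,i}\, t^n = \sum_{k\geq 1}(-1)^{k-1} t^{k(k+2i-1)/2}(1-t^k) = \sum_{k\geq 1}(-1)^k\bigl(t^{k(k+2i+1)/2} - t^{k(k+2i-1)/2}\bigr).$$

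There is no serious obstacle here: the argument is a short formal computation once the coefficient relation is set up. The only point requiring care is the bookkeeping at $i = 1$, where one must verify that no extra factor of $2$ appears in front of $a_{n,0}$ (the palindromic expansion of $P_n$ carries $a_{n,0}$ only once while $a_{n,j}$ for $j \geq 1$ appears as a coefficient of $q^j + q^{-j}$). After that, everything reduces to the single identity $t^{k(i-1)} + t^{k(i+1)} - 2t^{ki} = t^{k(i-1)}(1-t^k)^2$, which does all the real work.
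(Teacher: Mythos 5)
Your proposal is correct and follows essentially the same route as the paper: the relations $c_{n,0}=2(a_{n,1}-a_{n,0})$ and $c_{n,i}=a_{n,i+1}-2a_{n,i}+a_{n,i-1}$ obtained from $C_n(q)/q^n=(q-2+q^{-1})P_n(q)/q^{n-1}$, followed by substitution of Proposition~\ref{prop-gf-a} and the cancellation via $t^{k(i+1)}-2t^{ki}+t^{k(i-1)}=t^{k(i-1)}(1-t^k)^2$, is exactly the paper's argument. Your extra care at $i=1$ (reading $a_{n,i-1}$ as $a_{n,0}$) is a valid and harmless clarification.
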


\begin{proof}
The equality $C_n(q)/q^n = (q - 2 + q^{-1}) P_n(q)/q^{n-1}$ implies that the coefficients $c_{n,i}$ of~$C_n(q)$ are
related to the coefficients $a_{n,i}$ of~$P_n(q)$ by the relations
\begin{equation}\label{eq-c1}
c_{n,0} = - 2 a_{n,0} + 2 a_{n,1} 
\end{equation}
and 
\begin{equation}\label{eq-c2}
c_{n,i} = a_{n,i+1} - 2 a_{n,i} +  a_{n,i-1} \qquad (i\geq 1).
\end{equation}

From\,\eqref{eq-c1} and Proposition\,\ref{prop-gf-a}, we obtain
\begin{equation*}
\sum_{n\geq 1} \, c_{n,0} \, t^n 
= 2 \sum_{k\geq 1} \, (-1)^{k-1}\, \frac{t^{k(k+1)/2} \, (t^{k}-1)}{(1 - t^k)} 
= 2 \sum_{k\geq 1} \, (-1)^k\, t^{k(k+1)/2} .
\end{equation*}
If $i\geq 1$, then by \,\eqref{eq-c2} and Proposition\,\ref{prop-gf-a},
\begin{eqnarray*}
\sum_{n\geq 1} \, c_{n,i} \, t^n 
& = &  \sum_{k\geq 1} \, (-1)^{k-1}\, \frac{t^{k(k+1)/2} \, (t^{k(i+1)} - 2 t^{ki} + t^{k(i-1)})}{(1 - t^k)} \\
& = & \sum_{k\geq 1} \, (-1)^{k-1}\, \frac{t^{k(k+1)/2} \, t^{k(i-1)} (1 - t^k)^2}{(1 - t^k)} \\
& = & \sum_{k\geq 1} \, (-1)^{k-1}\, (1 - t^k)\, t^{k(k+2i-1)/2} \\
& = & \sum_{k\geq 1} \, (-1)^k \left( t^{k(k+2i +1)/2} - t^{k(k+2i-1)/2} \right).
\end{eqnarray*}
\end{proof}

It follows from the first equality in Proposition\,\ref{prop-gf-c}
that $c_{n,0} = 0$ unless $n$ is \emph{triangular}, i.\,e., of the form $n= k(k+1)/2$, in which case $c_{n,0} = 2\, (-1)^k$.
This proves Theorem\,\ref{th-cni}\,(a).

For Part\,(b) of the theorem we introduce the following definition.

\begin{definition}
Fix an integer $i\geq 0$. An integer $n > 0$ is $i$-trapezoidal if there is an integer $k \geq 1$ such that
\begin{equation*}
n = (i+1) + (i+2) + \cdots + (i+k) = \frac{k(k+2i+1)}{2} \, .
\end{equation*}
\end{definition}

A $0$-trapezoidal number is a triangular number. 

\begin{lemma}\label{lem-trapezoidal}
(1) An integer~$n$ is $i$-trapezoidal if and only if $8n + (2i+1)^2$ is a perfect square.

(2) If $n$ is $i$-trapezoidal for some $i\geq 1$, then it is not $(i-1)$-trapezoidal.
\end{lemma}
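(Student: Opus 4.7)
My plan is to treat Part~(1) by a direct algebraic manipulation of the trapezoidal formula, and then to deduce Part~(2) by comparing the two associated perfect squares and exploiting a size estimate coming from the constraint $k\geq 1$.

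For Part~(1), I would start from the defining relation $n = k(k+2i+1)/2$ and observe that
\begin{equation*}
8n = 4k^2 + 4(2i+1)k = (2k + 2i + 1)^2 - (2i+1)^2 \, .
\end{equation*}
This shows that $8n + (2i+1)^2 = (2k+2i+1)^2$ is a perfect square. Conversely, if $8n + (2i+1)^2 = m^2$, then $m^2$ is odd (since $8n$ is even and $(2i+1)^2$ is odd), so $m$ is odd; and $m^2 > (2i+1)^2$ since $n\geq 1$, so $m > 2i+1$. Writing $m = 2k + 2i + 1$ with $k\geq 1$ (which is possible because $m-(2i+1)$ is a positive even integer), we recover $n = (m^2 - (2i+1)^2)/8 = k(k+2i+1)/2$, so $n$ is $i$-trapezoidal.

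For Part~(2), assume $n$ is both $i$-trapezoidal and $(i-1)$-trapezoidal for some $i\geq 1$. By Part~(1), there exist odd positive integers $a$ and $b$ with
\begin{equation*}
a^2 = 8n + (2i+1)^2 \qquad\text{and}\qquad b^2 = 8n + (2i-1)^2 ,
\end{equation*}
and from the recipe above we have $a = 2k + 2i + 1 \geq 2i+3$ and $b = 2k' + 2i - 1 \geq 2i+1$ for some $k, k' \geq 1$. Subtracting yields $(a-b)(a+b) = (2i+1)^2 - (2i-1)^2 = 8i$. Since $a > b > 0$, we have $a - b > 0$; and since $a$ and $b$ are both odd, $a - b$ is an even positive integer, hence $a - b \geq 2$. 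On the other hand, $a + b \geq 4i + 4$, so
\begin{equation*}
a - b = \frac{8i}{a+b} \leq \frac{8i}{4i+4} = \frac{2i}{i+1} < 2 \, ,
\end{equation*}
contradicting $a - b \geq 2$. Therefore $n$ cannot be simultaneously $i$-trapezoidal and $(i-1)$-trapezoidal.

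The only mildly delicate point is making sure that in Part~(1) the integer $k$ we read off from $m$ is at least~$1$ (so that the resulting decomposition really is trapezoidal, not a degenerate empty sum); this is automatic from $n\geq 1$. The rest is routine once one writes the two perfect-square identities side by side.
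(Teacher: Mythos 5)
Your proof is correct and follows essentially the same route as the paper: Part~(1) is the same discriminant/complete-the-square argument, and Part~(2) rests on the same observation that both hypotheses would produce two odd perfect squares, each at least $(2i+1)^2$, differing by exactly $8i$, which is too small a gap. The only cosmetic difference is that you extract the contradiction from the factorization $(a-b)(a+b)=8i$ with $a-b\geq 2$ and $a+b\geq 4i+4$, whereas the paper phrases it as a statement about the spacing of consecutive odd squares; both are valid.
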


\begin{proof}
(1) An integer~$n$ is $i$-trapezoidal if and only if $x^2 + (2i+1)x -2n=0$ has an integral solution~$k \geq 1$.
This implies that the discriminant $8n + (2i+1)^2$ is a perfect square. Conversely, if the discriminant is the square
of a positive integer~$\delta$, then $\delta$ must be odd, and $k= (\delta -(2i+1))/2$ is a positive integer.

(2) Consider the sequence $((2(i+j) +1)^2)_{j\geq 0}$ of odd perfect squares $\geq (2i+1)^2$. 
The distance between the square $(2(i+j) +1)^2$ and the subsequent one is equal to~$8(i+j+1)$, 
which, if $j\geq 0$, is strictly bigger 
than the distance~$8i$  between $(2i-1)^2$ and $(2i+1)^2$. 
Now suppose that $n$ is both $i$-trapezoidal and $(i-1)$-trapezoidal. 
By Part\,(1), both $8n + (2i+1)^2$ and $8n + (2i-1)^2$ are odd perfect squares,
which are $> (2i-1)^2$ since $n\geq 1$. Hence they are $\geq (2i+1)^2$. 
Since their difference is~$8i$, we obtain a contradiction.
\end{proof}

We now complete the proof of Theorem\,\ref{th-cni}\,(b). 
Since by Lemma\,\ref{lem-trapezoidal} an integer $n$ cannot be $i$-trapezoidal for two consecutive~$i$, 
it follows from the second equality in Proposition\,\ref{prop-gf-c} that
\begin{equation*}
c_{n,i} =
\left\{
\begin{array}{cl}
(-1)^k & \text{if}\; n = k(k+2i +1)/2 \; \text{for some integer}\; k, \\
\noalign{\smallskip}
(-1)^{k-1} & \text{if}\; n = k(k+2i -1)/2 \; \text{for some integer}\; k, \\
\noalign{\smallskip}
0 & \text{otherwise}.
\end{array}
\right.
\end{equation*}

\begin{rem}
By Theorem\,\ref{th-cni} we have the inequalities $|c_{n,i}| \leq 2$.
Together with \eqref{eq-c1} and\,\eqref{eq-c2}, they imply that 
the variation of the coefficients~$a_{n,i}$ of~$P_n(q)$ is small. More precisely,
\begin{equation*}\label{variation}
\left| \, a_{n,i} - \frac{a_{n,i-1} + a_{n,i+1}}{2} \, \right| \leq 1
\end{equation*}
for all $n \geq 1$ and all $i\geq 0$ (we use here the convention $a_{n,-1} = a_{n,1}$).
\end{rem}

\subsection{Proof of Theorem\,\ref{th-omega}}\label{ssec-pf-omega}

Let $\omega$ be a complex root of unity of order $d = 2$, $3$, $4$, or~$6$.
For each such~$d$ we define the sequence~$(a_d(n))_{n\geq 1}$ by
\begin{equation}\label{def-adn}
1 + \sum_{n\geq 1} \, a_d(n)  \, t^n
= \prod_{i\geq 1}\, \frac{(1-t^i)^2}{1-(\omega + \omega^{-1})t^i + t^{2i}} \, .
\end{equation}
As observed in the introduction each $a_d(n)$ is an integer.
In view of\,\eqref{gf-Cnq} and of the definition of~$P_n(q)$ we have
\begin{equation*}
a_d(n) = \frac{C_n(\omega)}{\omega^n} = \left(\omega + \omega^{-1} -2 \right)  \frac{P_n(\omega)}{\omega^{n-1}} \, .
\end{equation*}
In order to prove Theorem\,\ref{th-omega} we compute $a_2(n)$, $a_3(n)$, $a_4(n)$, and $a_6(n)$ successively.
See Table\,\ref{6th-root} for the absolute values~$|a_d(n)|$ with $1 \leq n\leq 18$.

\subsubsection{The case $d=2$}

Setting $\omega = -1$ in\,\eqref{def-adn}, we obtain
\begin{equation}\label{def-a2n}
1 + \sum_{n\geq 1} \, a_2(n) \, t^n
= \prod_{i\geq 1}\, \frac{(1-t^i)^2}{1 + 2t^i +  t^{2i}}
= \left( \prod_{i\geq 1}\, \frac{1-t^i}{1+t^i}\right)^2.
\end{equation}
Now recall the following identity of Gauss (see\,\cite[(7.324)]{Fi} or\,\cite[19.9\,(i)]{HW}):
\begin{equation}\label{eq-Gauss}
\prod_{i\geq 1}\, \frac{1-t^i}{1 + t^i} = \sum_{k\in \ZZ}\, (-1)^k t^{k^2}.
\end{equation}
It follows from this identity that the RHS of\,\eqref{def-a2n} is equal to
\begin{equation*}
\left( \sum_{k\in \ZZ}\, (-1)^k t^{k^2} \right)^2 .
\end{equation*}
Since $k$ and $k^2$ are of the same parity, we obtain
\begin{equation*}
1 + \sum_{n\geq 1} \, a_2(n) \, t^n
= \left( \sum_{k\in \ZZ}\, (-t)^{k^2} \right)^2 .
\end{equation*}
Now the latter is clearly equal to $\sum_{n\geq 0} \, r(n) (-t)^n$, 
where $r(n)$ is the positive integer defined by\,\eqref{def-r1n}.
Identifying the terms, we deduce $a_2(n) = (-1)^n r(n)$ for all $n\geq 1$.
Since $a_2(n) =  (-1)^n C_n(-1) = (-1)^n 4 P_n(-1)$, we deduce the 
desired values for~$C_n(-1)$ and~$P_n(-1)$.

\subsubsection{The case $d=3$}

Let $j$ be a primitive third root of unity.
Setting $\omega = j$ in\,\eqref{def-adn}, we obtain
\begin{equation*}
1 + \sum_{n\geq 1} \, a_3(n)  \, t^n
= \prod_{i\geq 1}\, \frac{(1-t^i)^2}{1 + t^i + t^{2i}} =  \prod_{i\geq 1}\, \frac{(1-t^i)^3}{1 - t^{3i}} \, .
\end{equation*}
By\,\cite[\S\,32, p.\,79]{Fi},
\begin{equation*}\label{eq-a3}
a_3(n) =  - 3 \lambda(n) = -3 \left( E_1(n;3) - 3 E_1(n/3;3) \right)
\end{equation*}
where $\lambda(n)$ is the multiplicative function of~$n$ defined by\,\eqref{def-lambda}
and $E_1(n;3)$ is the excess function\,\eqref{def-excess}.
Hence, the desired values for~$C_n(j)$ and~$P_n(j)$.

\begin{rem}
By\,\cite{Sl}, the series $\sum_{n\geq 0} \, 3 \lambda(n) t^n$ is the theta series (with respect to a node)
of the two-dimensional honeycomb (not a lattice) in which each node has three neighbors. 
The sequence $a_3(n)$ (resp.~$\lambda(n)$) is Sequence~A005928 (resp. A113063) of\,\cite{OEIS}.

By~\cite[Seq.\,A005928]{OEIS}, $a_3(3n+2) = 0$, and $-a_3(3n+1)$ is Sequence~A005882 of\,\cite{OEIS}.
More interestingly, 
the sequence $a_3(3n) = r''(3n)$, where
\begin{equation}\label{def-r3n}
r''(n) = \left\{ (x,y) \in \ZZ^2 \, |\, x^2 + xy + y^2 =n  \right \}.
\end{equation}
The generating function of the sequence~$r''(n)$ is the theta series of the hexagonal lattice 
in which each point has six neighbors
(see Sequence~A004016 of\,\cite{OEIS} or~\cite[Chap.~4, \S~6.2]{CS}).
\end{rem}

\subsubsection{The case $d=4$} 

Let $\ii = \sqrt{-1}$ be a square root of~$-1$.
It suffices to compute~$a_4(n)$. It follows from\,\eqref{def-adn} that
\begin{equation}\label{def-a4n}
1 + \sum_{n\geq 1} \, a_4(n) \, t^n
= \prod_{i\geq 1}\, \frac{(1-t^i)^2}{1 + t^{2i}} \, .
\end{equation}
Then $a_4(n)$ forms Sequence~A082564 of\,\cite{OEIS} and we have
\begin{equation*}
a_4(n) = (-1)^{\lfloor (n+1)/2\rfloor}  \, |a_4(n)|.
\end{equation*}
By Lemma~\ref{lem-app} of the appendix, for the absolute value of~$a_4(n)$ we have
\begin{equation*}
1 + \sum\, |a_4(n)| \, q^n  
= \varphi(q) \varphi (q^2)
= \left( \sum_{n\in \ZZ}\, q^{n^2} \right) \left( \sum_{n\in \ZZ}\, q^{2n^2} \right),
\end{equation*}
which implies that $|a_4(n)| = r'(n)$, where $r'(n)$ is the positive integer defined by\,\eqref{def-r2n}.
Therefore, $|a_4(n)|$ forms Sequence~A033715 of\,\cite{OEIS}. 

\subsubsection{The case $d=6$}

Since $C_n(-j) = (-1)^n a_6(n) j^n$ and $P_n(-j) = (-1)^n a_6(n) j^{n-1}$, it suffices to compute~$a_6(n)$.
Setting $\omega = -j$ (which is a primitive sixth root of unity) in \eqref{def-adn}, we obtain
\begin{equation*}
1 + \sum_{n\geq 1} \, a_6(n)  \, t^n
= \prod_{i\geq 1}\, \frac{(1-t^i)^2}{1 - t^i + t^{2i}} =  \prod_{i\geq 1}\, \frac{(1-t^i) (1-t^{2i})(1 - t^{3i})}{1 - t^{6i}} \, .
\end{equation*}
We computed the integers~$a_6(n)$ in\,\cite{KReta} (see Theorem\,1.1 there);
they form Sequence A258210 in\,\cite{OEIS}.
This completes the proof of Theorem\,\ref{th-omega}.

\begin{rem}
Our result for~$a_6(n)$ shows that $a_6(n) = 0$ if and only if $a_2(n) = 0$, 
i.e.\ if and only $n$ is not the sum of two squares.
\end{rem}

\begin{table}[ht]
\caption{\emph{The absolute values of $a_d(n)$}}\label{6th-root}
\renewcommand\arraystretch{1.2}
\noindent\[
\begin{array}{|c||c|c|c|c|c|c|c|c|c|c|c|c|c|c|c|c|c|c|}
\hline
n & 1 & 2 & 3 & 4 & 5 & 6 & 7& 8& 9 & 10 & 11 & 12 & 13 & 14& 15 & 16 & 17 & 18\\
\hline
\hline
|a_2(n)| & 4 & 4 & 0 & 4 & 8 & 0 & 0 & 4 & 4 & 8 & 0 & 0 & 8 & 0 & 0 & 4 & 8 & 4 \\
\hline
|a_3(n)| & 3 & 0 & 6 & 3 & 0 & 0 & 6 & 0 & 6 & 0 & 0 & 6 & 6 & 0 & 0 & 3 & 0 & 0\\
\hline
|a_4(n)| & 2 & 2 & 4 & 2 & 0 & 4 & 0 & 2 & 6 & 0 & 4 & 4 & 0 & 0 & 0 & 2 & 4 & 6 \\
\hline
|a_6(n)| & 1 & 2 & 0 & 1 & 4 & 0 & 0 & 2 & 4 & 2 & 0 & 0 & 2 & 0 & 0 & 1 & 4 & 4\\
\hline
\end{array}
\]
\end{table}

\section{Growth and sections}\label{sec-misc}

In this section we collect further results on the polynomials~$C_n(q)$ and~$P_n(q)$.

\subsection{Growth of~$C_n(q)$}\label{ssec-growth}

(a) Fix an integer~$n\geq 1$.
Let us examine how $C_n(q)$ grows when $q$ tends to infinity. 

We know that $C_n(q)$ counts the number of $\FF_q$-points of the Hilbert scheme~$H^n_{\FF_q}$
of $n$~points on the two-dimensional torus. This scheme is an open subset of the 
Hilbert scheme $\Hilb^n(\AA_{\FF_q}^2)$ of $n$~points on the affine plane. Let $A_n(q)$ be the 
number of $\FF_q$-points of~$\Hilb^n(\AA_{\FF_q}^2)$; 
by\,\cite[Remark\,4.7]{KRidealcount} $A_n(q)$ is a monic polynomial of degree~$2n$ in~$q$.
Since $C_n(q)$ is also a monic polynomial of degree~$2n$, we necessarily have
\begin{equation*}
C_n(q) \sim A_n(q) \sim q^{2n}.
\end{equation*}
Moreover, the cardinality $A_n(q) - C_n(q)$ of the complement of $H^n_{\FF_q}$ in $\Hilb^n(\AA_{\FF_q}^2)$ 
must become small compared to~$A_n(q)$ when $q$ tends to~$\infty$. 
Indeed, using the expansions
\begin{equation*}
A_n(q) = q^{2n} + q^{2n-1} +  \text{terms of degree} \, \leq 2n-2 
\end{equation*}
and
\begin{eqnarray*}
C_n(q) 
& = & (q-1)^2 \left(q^{2n-2} + q^{2n-3} +  \text{terms of degree} \, \leq 2n-4 \right)\\
&  = & q^{2n} - q^{2n-1} + \text{terms of degree} \, \leq 2n-2,
\end{eqnarray*}
we easily show that
\begin{equation*}
\frac{A_n(q) - C_n(q)}{A_n(q)} = \frac{2}{q} + O\left( \frac{1}{q^2} \right) .
\end{equation*}

(b) We now fix $q$ and let $n$ tend to infinity. Since by Theorem\,\ref{th-cni}
we have $|c_{n,0}| \leq 2$ and $|c_{n,i}| \leq 1$ for $i\neq 0$, 
we obtain the inequality
\[
|C_n(q)| \leq q^n + \sum_{i=0}^{2n}\, q^i = q^n + \frac{q^{2n+1}-1}{q-1} \, ,
\]
which implies that $|C_n(q)|$ is bounded above by a function of~$n$ equivalent to 
\[
\frac{1}{q-1} \, q^{2n+1}.
\]
Hence, $|P_n(q)|$ is bounded above by a function equivalent to $q^{2n+1}/(q-1)^3$.

\subsection{Sections of the polynomials~$P_n(q)$}\label{ssec-sect}

Given an integer $k\geq 1$, we define the \emph{$k$-section}~$s_k(n)$ of~$P_n(q)$ to be the sum
of the (positive) coefficients of the monomials of~$P_n(q)$ of the form~$q^{ki}$ ($i\geq 0$).
We now compute $s_1(n)$, $s_2(n)$, $s_3(n)$, $s_4(n)$, and~$s_6(n)$.
We keep the notation introduced in the introduction.

(a) When $k = 1$, we clearly have $s_1(n) = P_n(1) = \sigma(n)$,
where $\sigma(n)$ is the sum of positive divisors of~$n$.

(b) When $k=2$, then $s_2(n) = (P_n(1) + P_n(-1))/2$, which by Theorem\,\ref{th-omega}\,(a) implies
\[
s_2(n) = \frac{\sigma(n) + r(n)/4}{2} \, .
\]
By\,\cite[Table\,2, Symmetry \textit{p4}]{Ru}, 
$s_2(n)$ is equal to the number of orbits of subgroups of index~$n$ of~$\ZZ^2$ under 
the action of the cyclic group generated by the rotation of angle~$\pi/2$.

(c) For $k=3$, Theorem\,\ref{th-omega}\,(b) implies
\[
s_3(n) = \frac{P_n(1) + P_n(j) + P_n(j^2)}{3} =  \frac{\sigma(n) + \lambda(n) \,(j^{n-1} + j^{-(n-1)})}{3} \, .
\]
By\,\cite[Table\,2, Symmetry \textit{p6}]{Ru}, 
\[
s_3(n) = \frac{\sigma(n) + r''(n)/3}{3} \, ,
\]
where $r''(n)$ is defined by\,\eqref{def-r3n},
and $r''(n)/6$ counts those subgroups of index~$n$ of the lattice generated by~$1$ and~$j$ in~$\CC$ 
which are fixed under the rotation of angle~$\pi/3$.
According to~\cite[\S~51, p.~80, Exercise~XXII.2]{Di}, we have $r''(n) = 6 \, E_1(n;3)$, 
where $E_1(n;3)$ is the excess function~\eqref{def-excess}.
The integers~$s_3(n)$ form Sequence~A145394 of~\cite{OEIS}.

(d) Let $k=4$. By Theorem\,\ref{th-omega}\,(c) we have
\begin{eqnarray*}
s_4(n) 
& = &  \frac{P_n(1) + P_n(\ii) + P_n(-1) + P_n(-\ii)}{4}\\
& = & \frac{\sigma(n) + r(n)/4 + (-1)^{\lfloor (n-1)/2\rfloor} \, r'(n) \, (\ii^{n-1} + \ii^{-(n-1)})/2}{4} \, .
\end{eqnarray*}

(e) For $k = 6$, by Theorem\,\ref{th-omega}\,(b) and\,(d) we have
\begin{eqnarray*}
s_6(n) & = & \frac{P_n(1) + P_n(-j^2) + P_n(j) + P_n(-1) + P_n(j^2) + P_n(-j)}{6} \\
& = & \begin{cases}
\displaystyle{\frac{\sigma(n) - 3r(n)/4 - \lambda(n)}{6}}  & \text{if} \; n \equiv 0,  \\
\noalign{\medskip}
\displaystyle{\frac{\sigma(n) + 3r(n)/4 + 2 \lambda(n)}{6}} & \text{if} \; n \equiv 1, \quad\pmod{3}\\
\noalign{\medskip}
\displaystyle{\frac{\sigma(n) + 3r(n)/4 - \lambda(n)}{6}} & \text{if} \; n \equiv 2.
\end{cases}
\end{eqnarray*}

The values of~$s_2(n)$, $s_3(n)$, $s_4(n)$ and~$s_6(n)$ for $n\leq 18$ are given in Table\,\ref{4-sect}.

\begin{table}[ht]
\caption{\emph{Sections of the polynomials~$P_n(q)$}}\label{4-sect}
\renewcommand\arraystretch{1.2}
\noindent\[
\begin{array}{|c||c|c|c|c|c|c|c|c|c|c|c|c|c|c|c|c|c|c|c|c|c|c|}
\hline
n & 1 & 2 & 3 & 4 & 5 & 6 & 7& 8& 9 & 10 & 11 & 12 & 13 & 14 & 15 & 16 & 17& 18\\
\hline
\hline
s_2(n) & 1 &  2 & 2 & 4 & 4 & 6 & 4 & 8 & 7 & 10 & 6 & 14 & 7 & 12 & 12 & 16 & 10 & 20 \\
\hline
s_3(n) & 1 & 1 & 2 & 3 & 2 & 4 & 4 & 5 & 5 & 6 & 4 & 10 & 6 & 8 & 8 & 11 & 6 & 13 \\
\hline
s_4(n) & 1 & 1 & 2 & 2 & 2 & 3 & 2 & 4 & 5 & 5 & 4 & 7 & 4 & 6 & 6 & 8 & 6 & 10 \\
\hline
s_6(n) & 1 & 1 & 1 & 2 & 2 & 2 & 2 & 3 & 2 & 4 & 2 & 5 & 4 & 4 & 4 & 6 & 4 & 6 \\
\hline
\end{array}
\]
\end{table}


\appendix

\section{On a result by Michael Somos}\label{sec-Somos}

This appendix is based on the use of multisections of power series, 
which is an idea due to Michael Somos (see\,\cite{So}).

Replacing $t$ by $q$ in\,\eqref{def-a4n}, we have
\begin{equation*}
1 + \sum_{n\geq 1} \, a_4(n) \, q^n = \prod_{i\geq 1}\, \frac{(1-q^i)^2}{1 + q^{2i}} \, .
\end{equation*}
We now express this generating function and the generating function of the absolute values~$|a_4(n)|$
in terms of Ramanujan's $\varphi$-function
\begin{equation*}
\varphi(q) = \sum_{n\in \ZZ}\, q^{n^2} = 1 + 2 \sum_{n\geq 1}\, q^{n^2}. 
\end{equation*}

The following lemma is due to Somos (see\,\cite[A082564 and A033715]{OEIS}).

\begin{lemma}\label{lem-app}
We have
\begin{equation*}
1 + \sum_{n\geq 1} \, a_4(n) \, q^n = \varphi(-q) \varphi(-q^2)
\quad\text{and}\quad
1 +  \sum_{n\geq 1} \, |a_4(n)| \, q^n = \varphi(q) \varphi(q^2) .
\end{equation*}
\end{lemma}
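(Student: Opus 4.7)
The two identities call for very different techniques, so I would treat them in sequence.

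\emph{First identity.} This is a routine manipulation of infinite products. Starting from the specialisation of the Jacobi triple product
\begin{equation*}
\varphi(-q) = \prod_{n\geq 1}(1-q^{2n})(1-q^{2n-1})^2 = \prod_{n\geq 1}\frac{(1-q^n)^2}{1-q^{2n}}
\end{equation*}
and applying the same formula to $\varphi(-q^2)$, the product $\varphi(-q)\varphi(-q^2)$ telescopes via $(1-q^{2n})/(1-q^{4n}) = 1/(1+q^{2n})$ into
\begin{equation*}
\prod_{n\geq 1}\frac{(1-q^n)^2}{1+q^{2n}},
\end{equation*}
which is exactly the defining product of $1+\sum_{n\geq 1} a_4(n)\, q^n$.

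\emph{Second identity.} Expanding the theta-function factorisations,
\begin{equation*}
\varphi(\pm q)\,\varphi(\pm q^2) = \sum_{(m,n)\in\ZZ^2}\, (\pm 1)^{m+n}\, q^{m^2+2n^2},
\end{equation*}
so that, in view of the first identity, the second one is equivalent to the assertion that $|a_4(N)| = r'(N)$ for every $N$, or equivalently, that the sign $(-1)^{m+n}$ is constant over all integer representations of $N$ as $m^2 + 2n^2$.

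This last claim is the only step that requires real work, and it is where I expect the main obstacle. I would settle it by a short case analysis modulo~$8$: since $m^2 \bmod 8 \in \{0,1,4\}$ and $2n^2 \bmod 8 \in \{0,2\}$, the residue of $N$ modulo~$8$ already determines the parities of $m$ and of $n$ separately (and in particular forces $r'(N) = 0$ for $N \equiv 5, 7 \pmod 8$). Once this parity lemma is in hand, $|a_4(N)| = r'(N)$ follows immediately, and the coefficient-by-coefficient comparison yields the second identity; as a bonus the same table produces the closed sign $(-1)^{\lfloor (N+1)/2 \rfloor}$ for $a_4(N)$ that is invoked in the proof of Theorem\,\ref{th-omega}\,(c).
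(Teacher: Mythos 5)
Your proof is correct, and while the first identity is handled exactly as in the paper, the second takes a genuinely different route. For the first identity the paper likewise starts from Gauss's identity (your Jacobi-triple-product specialisation is the same thing), obtains $\varphi(-q)=\prod_{n\geq 1}(1-q^n)^2/(1-q^{2n})$, and telescopes the product with $\varphi(-q^2)$ down to $\prod_{n\geq 1}(1-q^n)^2/(1+q^{2n})$, the defining product of $1+\sum a_4(n)q^n$. For the second identity the paper follows Somos's multisection idea: writing $\varphi(q)=\varphi(q^4)+2q\,\psi(q^8)$ with $\psi(q)=\sum_{n\geq 0}q^{n(n+1)/2}$, it splits $\varphi(\pm q)\varphi(\pm q^2)$ into four $4$-sections whose coefficient sequences are $b_n$, $\mp 2b'_n$, $\mp 2b''_n$, $+4b'''_n$ with all $b$'s non-negative, so that flipping the two negative sections turns $\varphi(-q)\varphi(-q^2)$ into $\varphi(q)\varphi(q^2)$. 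You instead expand $\varphi(-q)\varphi(-q^2)=\sum_{(m,n)\in\ZZ^2}(-1)^{m+n}q^{m^2+2n^2}$ and observe that the residue of $N=m^2+2n^2$ modulo $8$ determines the parities of $m$ and $n$ separately: the four parity classes of $(m,n)$ land in the disjoint residue sets $\{0,4\}$, $\{2,6\}$, $\{1\}$, $\{3\}$, and $N\equiv 5,7\pmod 8$ admits no representation. Hence $(-1)^{m+n}$ is constant over all representations of a given $N$, which gives $|a_4(N)|=r'(N)$ and the second identity; I checked the parity table and it is right. Your argument is more elementary (no $\psi$, no multisection bookkeeping) and delivers in one stroke both $|a_4(n)|=r'(n)$ and the explicit sign $(-1)^{\lfloor (n+1)/2\rfloor}$, facts the paper extracts afterwards in the proof of Theorem\,\ref{th-omega}\,(c); the paper's multisection method is the more mechanical tool and applies even when the eta-product has no such clean lattice-point interpretation of the signs.
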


\begin{proof}
Replacing $t$ by $-q$ in Gauss's identity\,\eqref{eq-Gauss}, 
we deduce that $\varphi(q)$ can be expanded as the infinite products
\begin{equation}\label{phi-prod}
\varphi(q) 
= \prod_{n\geq 1}\, \frac{1 - (-1)^n q^n}{1 + (-1)^n q^n} 
= \prod_{n\geq 1}\, \frac{(1+q^{2n-1})(1-q^{2n})}{(1-q^{2n-1})(1+q^{2n})}. 
\end{equation}
Using\,\eqref{phi-prod},
we have
\begin{eqnarray*}
\varphi(-q) \varphi(-q^2)
& = & \prod_{n\geq 1}\, 
\frac{(1-q^{2n-1})(1-q^{2n})(1-q^{4n-2})(1-q^{4n})}{(1+q^{2n-1})(1+q^{2n})(1+q^{4n-2})(1+q^{4n})} \\
& = & \prod_{n\geq 1}\, \frac{(1 - q^{2n})(1 - q^n)}{(1 + q^{2n})(1 + q^n)} \\
& = & \prod_{n\geq 1}\, \frac{(1 - q^{2n})^2(1 - q^n)^2}{(1 + q^{2n})(1 + q^n)(1 - q^{2n})(1 - q^n)} \\
& = & \prod_{n\geq 1}\, \frac{(1 - q^{2n})^2(1 - q^n)^2 }{(1 - q^{4n})(1 - q^{2n})}  \\
& = & \prod_{n\geq 1}\, \frac{(1 - q^{2n})(1 - q^n)^2 }{1 - q^{4n}} \\
& = & \prod_{n\geq 1}\, \frac{(1 - q^n)^2 }{1 + q^{2n}}
= 1 + \sum_{n\geq 1} \, a_4(n) \, q^n.
\end{eqnarray*}
Similarly,
\begin{eqnarray*}
\varphi(q) \varphi(q^2)
& = & \prod_{n\geq 1}\, \frac{(1+q^{2n-1})(1-q^{2n})(1+q^{4n-2})(1-q^{4n})}{(1-q^{2n-1})(1+q^{2n})(1-q^{4n-2})(1+q^{4n})} \\
& = & \prod_{n\geq 1}\, \frac{(1+q^n)(1-q^{2n})^2 (1-q^{4n})^3}{(1-q^n)(1-q^{8n})^2} \\
& = & \prod_{n\geq 1}\, \frac{(1-q^{2n})^3 (1-q^{4n})^3}{(1-q^n)^2(1-q^{8n})^2} \, .
\end{eqnarray*}

Introducing Ramanujan's $\psi$-function
$\psi(q) = \sum_{n \geq 0}\, q^{n(n+1)/2}$, we obtain
\begin{eqnarray*}
\varphi(q^4) + 2q \psi(q^8) 
& = & \sum_{n\in \ZZ}\, q^{(2n)^2} + 2 \sum_{n \geq 0}\, q^{4n(n+1)+1} \\
& = & \sum_{n\in \ZZ}\, q^{(2n)^2} + 2 \sum_{n \geq 0}\, q^{(2n+1)^2} = \varphi(q).
\end{eqnarray*}
Similarly, $\varphi(q^4) - 2q \psi(q^8) = \varphi(-q)$. Therefore,
$\varphi(\pm q^2) = \varphi(q^8) \pm 2q \psi(q^{16})$.
Using the previous identities, we obtain
\begin{multline*}
\varphi(q) \varphi(q^2) \\
= \varphi(q^4) \varphi(q^8) + 2q \psi(q^8) \varphi(q^8) + 2q^2 \psi(q^{16}) \varphi(q^4) + 4q^3 \psi(q^8) \varphi(q^{16})
\end{multline*}
and
\begin{multline*}
\varphi(-q) \varphi(-q^2) \\
= \varphi(q^4) \varphi(q^8) - 2q \psi(q^8) \varphi(q^8) - 2q^2 \psi(q^{16}) \varphi(q^4) + 4q^3 \psi(q^8) \varphi(q^{16}).
\end{multline*}

Now, in view of the definitions of~$\varphi$ and~$\psi$,
\begin{eqnarray*}
\varphi(q^4) \varphi(q^8) & = & \sum_{n\geq 0} \, b_n q^{4n} , \qquad
\psi(q^8) \varphi(q^8) =  \sum_{n\geq 0} \, b'_n q^{4n} , \\
\psi(q^{16}) \varphi(q^4) & = & \sum_{n\geq 0} \, b''_n q^{4n} , \qquad
\psi(q^8) \varphi(q^{16}) = \sum_{n\geq 0} \, b'''_n q^{4n} ,
\end{eqnarray*}
where $b_n, b'_n, b''_n, b'''_n$ are non-negative integers.
Therefore,
\begin{eqnarray*}
1 + \sum\, a_4(n) \, q^n & = & \varphi(-q) \varphi (-q^2) \\
& = & \sum_{n\geq 0} \, \left( b_n q^{4n} -2 b'_n q^{4n+1} -2 b''_n q^{4n+2} + 4 b'''_n q^{4n+3} \right) .
\end{eqnarray*}
We deduce 
\begin{eqnarray*}
1 + \sum\, |a_4(n)| \, q^n 
& = & \sum_{n\geq 0} \, \left( b_n q^{4n} + 2 b'_n q^{4n+1} + 2 b''_n q^{4n+2} + 4 b'''_n q^{4n+3} \right) \\
& = & \varphi(q) \varphi (q^2), \\
\end{eqnarray*}
which completes the proof.
\end{proof}

\begin{rem}
Using the computations above, we can easily express the two generating functions considered in this appendix
in terms of Dedekind's eta function $\eta(z) = q^{1/24} \prod_{n \geq 1}\, (1-q^n)$ where $q = e^{2\pi iz}$. 
Indeed,
\begin{equation*}
1 + \sum_{n\geq 1} \, a_4(n) \, q^n = \frac{\eta(z)^2 \, \eta(2z)}{\eta(4z)}
\quad\text{and}\quad
1 +  \sum_{n\geq 1} \, |a_4(n)| \, q^n = \frac{\eta(2z)^3 \, \eta(4z)^3}{\eta(z)^2 \, \eta(8z)^2} \, .
\end{equation*}
\end{rem}

\section*{Acknowledgement}

We are grateful to Giuseppe Ancona, Pierre Baumann, Olivier Benoist, Fran\c cois Bergeron, Mark Haiman, G\"unter K\"ohler,
Emma\-nuel Letellier and Luca Migliorini for helpful discussions.
We also thank Michael Somos for various comments and for the idea on which Appendix\,\ref{sec-Somos} is based.

The second-named author is grateful to the Universit\'e de Strasbourg for the invited professorship
which allowed him to spend the month of June~2014 at IRMA; 
he was also supported by NSERC (Canada).


\end{document}